%-----------------------------------------------------------------------
% Beginning of article.tex
%-----------------------------------------------------------------------
%
% AMS-LaTeX 1.2 sample file for book proceedings, based on amsproc.cls.
%
% Replace amsproc by the documentclass for the target series, e.g. pspum-l.
%
\documentclass{amsproc}

\textheight 21cm \voffset -0.5cm
\textwidth 15cm \hoffset -1.5cm

\newtheorem{theorem}{Theorem}[section]
%[section]
\newtheorem{lemma}[theorem]{Lemma}
\newtheorem{proposition}[theorem]{Proposition}
\newtheorem{corollary}[theorem]{Corollary}
\theoremstyle{definition}
\newtheorem{definition}[theorem]{Definition}
\newtheorem{example}[theorem]{Example}

\theoremstyle{remark}
\newtheorem{remark}[theorem]{Remark}

\numberwithin{equation}{section}

%    Absolute value notation

%    Blank box placeholder for figures (to avoid requiring any
%    particular graphics capabilities for printing this document).

\newcommand{\Z}{\mathbb{Z}}
\newcommand{\N}{\mathbb{N}}
\newcommand{\g}{\gcd}
\newcommand{\m}{\ \text{\rm mod }}
\newcommand{\eg}{\overset{*}{=}}

\newcommand{\uloopr}[1]{\ar@'{@+{[0,0]+(-4,5)}@+{[0,0]+(0,10)}@+{[0,0] +(4,5)}}^{#1}}
\newcommand{\uloopd}[1]{\ar@'{@+{[0,0]+(5,4)}@+{[0,0]+(10,0)}@+{[0,0]+ (5,-4)}}^{#1}}
\newcommand{\dloopr}[1]{\ar@'{@+{[0,0]+(-4,-5)}@+{[0,0]+(0,-10)}@+{[0, 0]+(4,-5)}}_{#1}}
\newcommand{\dloopd}[1]{\ar@'{@+{[0,0]+(-5,4)}@+{[0,0]+(-10,0)}@+{[0,0 ]+(-5,-4)}}_{#1}}
\newcommand{\luloop}[1]{\ar@'{@+{[0,0]+(-8,2)}@+{[0,0]+(-10,10)}@+{[0, 0]+(2,2)}}^{#1}}

\usepackage{amssymb}
\usepackage{amsfonts}
\usepackage{latexsym}
\usepackage{amscd}
\usepackage[mathscr]{euscript}
\usepackage{xy} \xyoption{all}

\begin{document}

\title{The Leavitt path algebras of generalized Cayley graphs }
%[Leavitt path algebras of the Cayley graphs  $C_n^0$, $C_n^1$, and $C_n^2$]
%    Information for first author
\author{Gene Abrams}
%    Address of record for the research reported here
\address{Department of Mathematics, University of Colorado,
Colorado Springs, CO 80918 U.S.A.}
%    Current address
%\curraddr{Department of Mathematics and Statistics,
%Case Western Reserve University, Cleveland, Ohio 43403}
\email{abrams@math.uccs.edu}
%    \thanks will become a 1st page footnote.
\thanks{The first  author is partially supported by a Simons Foundation Collaboration Grants for
Mathematicians Award \#208941. The second author was partially supported by the Spanish MEC and Fondos FEDER through project MTM2010-15223, and by the Junta de Andaluc\'{\i}a and Fondos FEDER, jointly, through projects FQM-336 and FQM-2467. Part of this work was carried out during a visit of the second author to the University of Colorado Colorado Springs. The second author thanks this host institution for its warm hospitality and support.}

%    Information for second author
\author{Gonzalo Aranda Pino}
\address{Department of Algebra, Geometry and Topology, University of M\'alaga, 29071, Spain}
\email{g.aranda@uma.es}
%\thanks{Support information for the second author.}

%    General info
\subjclass{}
%\date{}

%\dedicatory{This paper is dedicated to our advisors.}

\keywords{Leavitt path algebra, Cayley graph, Fibonacci sequence}

\begin{abstract}
Let $n$ be a positive integer.  For each $0\leq j \leq n-1$ we let $C_n^{j}$ denote Cayley graph for the cyclic group  $\Z_n $  with respect to the subset  $\{1, j\}$.  For any such pair $(n,j)$ we  compute the size of the Grothendieck group of  the Leavitt path algebra $L_K(C_n^j)$; the analysis is related to a collection of integer sequences described by Haselgrove in the 1940's.     When $j=0,1,$ or $2$, we are able to extract enough additional information about the structure of these Grothendieck groups  so that we may apply  a Kirchberg-Phillips-type  result to explicitly realize the algebras $L_K(C_n^j)$ as the Leavitt path algebras of graphs having at most three vertices.   The analysis in the $j=2$ case leads us to some perhaps surprising and apparently nontrivial connections to the classical Fibonacci sequence.  

\bigskip
\bigskip
\begin{center}
Dedicated to the memory of William G. ``Bill" Leavitt, 1916-2013.
\end{center}
\end{abstract}

\maketitle

Let $n$ be a fixed positive integer, and let $j$ be an integer for which $0 \leq j \leq n-1$.  We define the {\it Cayley graph} $C_n^j$ to be the directed graph consisting of $n$ vertices $\{v_1, v_2, \hdots, v_n\}$  and $2n$ edges $\{e_1, e_2, \hdots, e_n, f_1, f_2, \hdots, f_n\}$ for which 
$s(e_i) = v_i,  \  r(e_i) = v_{i+1},  \  s(f_i) = v_i,   $ and $ r(f_i) = v_{i+j},$
where subscripts are interpreted ${\rm mod } \ n$, and where $s(e )$ (resp., $r(e )$) denotes the source (resp., range) vertex of the edge $e$.    For clarity, we picture the three graphs $C_4^0$, $C_4^1$, and $C_4^2$ here.

 $$  C_4^0 = \ \ \ \ {
\def\labelstyle{\displaystyle}
\xymatrix{ \bullet^{v_1} \ar[r]  \ar@(l,u) &  \bullet^{v_2} \ar@(u,r) \ar[d] \\  \bullet^{v_4} \ar@(d,l) \ar[u] & \bullet^{v_3} \ar[l] \ar@(r,d)}} \hskip1cm  \ C_4^1 =  \ \ {
\def\labelstyle{\displaystyle}
\xymatrix{ \bullet^{v_1} \ar@/^{5pt}/ [r] \ar@/_{5pt}/[r]   &  \bullet^{v_2} \ar@/^{5pt}/ [d] \ar@/_{5pt}/[d] \\  \bullet^{v_4}  \ar@/^{5pt}/ [u] \ar@/_{5pt}/[u] & \bullet^{v_3} \ar@/^{5pt}/ [l] \ar@/_{5pt}/[l]
}} \hskip1cm  C_4^2 = \ \   {
\def\labelstyle{\displaystyle}
\xymatrix{ \bullet^{v_1} \ar@/^{5pt}/ [dr] \ar[r]   &  \bullet^{v_2} \ar@/^{5pt}/ [dl] \ar[d] \\  \bullet^{v_4}  \ar@/^{5pt}/ [ur] \ar[u] & \bullet^{v_3} \ar@/^{5pt}/ [ul] \ar[l]
}}
$$

\vskip0.5cm

\noindent
  Less formally, $C_n^j$ is the graph with $n$ vertices and $2n$ edges, where each vertex $v$ emits two edges, one to its ``next" neighboring vertex (which we will draw throughout this article in a clockwise direction), and one to the vertex $j$ places clockwise from $v$.    More formally,  for any finite group $G$, and any set of generators $S$ of $G$, the {\it Cayley graph of } $G$ {\it with respect to} $S$ is the directed graph $E_{G,S}$ having vertex set $\{v_g \ | \ g\in G\}$, and in which there is an edge from $v_g$ to $v_h$ in case there exists $s\in S$ with $h = gs$ in $G$.  Thus,  for $n\geq 3$, the graph $C_n^j$ is the Cayley graph $E_{G,S}$ of the cyclic group $G = \Z_n$ with respect to the generating subset $S = \{1,j\}$ of $\Z_n$. 
  
  In \cite{ASch} a description is given of the  Leavitt path algebras of the form  $L_K(C_n^{n-1})$. Specifically, it is shown in \cite[Theorem 8]{ASch}, that there are exactly four isomorphism classes represented by the collection $\{ L_K(C_n^{n-1}) \ | \ n\in \mathbb{N}\}$. 
  
  In the current article we continue the investigation of Leavitt path algebras associated to Cayley graphs.  In Section \ref{Background} we give the germane definitions, and present background information about our main tool, the Algebraic KP Theorem.  In Section \ref{Haselgrovenumbers} we compute the two important (and closely related) integers $|K_0(L_K(C_n^j))|$ and ${\rm det}(I - A_{C_n^j}^t)$, where $K_0(  -  )$ denotes the standard Grothendieck group of a ring, and $A_{ ( -  ) }$ denotes the incidence matrix of a graph.  Specifically, we show that, for fixed $j$, the integers $|K_0(L_K(C_n^j))|$ are (up to sign)  the entries in the ``$j^{{\rm th}}$  Haselgrove sequence", a sequence investigated by Haselgrove \cite{H} as part of a possible approach to  establishing Fermat's Last Theorem.     In Section \ref{SectionC^0andC^1} we completely describe up to isomorphism the collection $\{ L_K(C_n^0) \ | \ n\in \mathbb{N}\}$ (there is only one such algebra), and the collection $\{ L_K(C_n^1) \ | \ n\in \mathbb{N}\}$ (there are infinitely many such algebras);  each of these algebras is subsequently realized in terms of (matrices over) the standard {\it Leavitt algebras} $L_K(1,m)$.   We conclude with Section \ref{SectionC^2}, in which we completely describe up to isomorphism the collection $\{ L_K(C_n^2) \ | \ n\in \mathbb{N}\}$.  As we shall see, there are infinitely many such algebras in this class.  We are able to  describe the groups $K_0(L_K(C_n^2))$ explicitly in terms of the integers appearing in the second Haselgrove sequence, together with integers arising from the standard Fibonacci sequence.   The descriptions of the algebras presented in Sections \ref{SectionC^0andC^1} and \ref{SectionC^2} will follow from an application of the Algebraic KP Theorem.

  \section{Background information: \\ Leavitt path algebras, and the Algebraic KP Theorem}\label{Background}
  
For any field $K$ and directed graph $E$ the Leavitt path algebra $L_K(E)$ has been the focus of sustained investigation since 2004.  We give here a basic description of $L_K(E)$; for additional information, see e.g.   \cite{AAP1} or \cite{TheBook}.  

\medskip

{\bf Definition of Leavitt path algebra.}   Let $K$ be a field, and let $E = (E^0, E^1, r,s)$ be a directed  graph with vertex set $E^0$ and edge set $E^1$.   The {\em Leavitt path $K$-algebra} $L_K(E)$ {\em of $E$ with coefficients in $K$} is  the $K$-algebra generated by a set $\{v\mid v\in E^0\}$, together with a set of variables $\{e,e^*\mid e\in E^1\}$, which satisfy the following relations:

(V)   \ \ \  \ $vw = \delta_{v,w}v$ for all $v,w\in E^0$, \  
%(i.e., $\{v\mid v\in E^0\}$ is a set of orthogonal idempotents),

  (E1) \ \ \ $s(e)e=er(e)=e$ for all $e\in E^1$,

(E2) \ \ \ $r(e)e^*=e^*s(e)=e^*$ for all $e\in E^1$,

 (CK1) \ $e^*e'=\delta _{e,e'}r(e)$ for all $e,e'\in E^1$,

(CK2)Ê\ \ $v=\sum _{\{ e\in E^1\mid s(e)=v \}}ee^*$ for every   $v\in E^0$ for which $0 < |s^{-1}(v)| < \infty$.

An alternate description of $L_K(E)$ may be given as follows.  For any graph $E$ let $\widehat{E}$ denote the ``double graph" of $E$, gotten by adding to $E$ an edge $e^*$ in a reversed direction for each edge $e\in E^1$.   Then $L_K(E)$ is the usual path algebra $K\widehat{E}$, modulo the ideal generated by the relations (CK1) and (CK2).     \hfill $\Box$

\medskip

It is easy to show that $L_K(E)$ is unital if and only if $|E^0|$ is finite.  This is of course the case when $E = C_n^j$.  

For a directed graph $E$ having $n$ vertices $v_1, v_2,Ê\dots, v_n$  we denote by $A_E$ the usual {\it incidence matrix} of $E$, namely, the matrix $A_E = (a_{i,j})$ where, for each pair $1\leq i,j \leq n$, the entry $a_{i,j}$ denotes the number of edges $e$ in $E$ for which $s(e)=v_i$ and $r(e) = v_j$.

Let $E$ be a directed graph  with vertices $v_1, v_2, \hdots, v_n$ and incidence matrix $A_E = (a_{i,j})$.  We let $F_n$ denote the free abelian monoid on the generators $v_1, v_2, \hdots, v_n$ (so $F_n \cong \bigoplus_{i=1}^n \Z^+$ as monoids).  We denote the identity element of this monoid by $z$.    We define a relation $\approx$ on $F_n$ by setting 
$$v_i  \approx  \sum_{j=1}^n a_{i,j}v_j$$
for each non-sink $v_i$, and  denote by $\sim_E$ the equivalence relation on $F_n$ generated by $\approx$.        
%We denote the zero element of $M_E$ by $z$. 
%Equivalently, $M_E$ may be interpreted  as follows.     For each non-sink vertex $v_i$ ($1 \leq i \leq t$)  define  $$\vec{b_i} =  (0, 0, ... , 1, 0, ... 0) \in (\Z^+)^{n}.$$   Consider the equivalence relation  $\sim_E$ in   $(\Z^+)^{n}$,  generated by setting  $$\vec{b_i} \sim_E (a_{i,1}, a_{i,2}, ..., a_{i,n})$$ for each non-sink $v_i$.  Define $$M_E = (\Z^+)^{n} / \sim_E.$$ The operation $+$ in $M_E$ is:  \ \  $[\vec{a}] + [\vec{a'}] = [\vec{a} + \vec{a'}]$   for $\vec{a}, \vec{a'} \in (\Z^+)^{n}$.        We denote the zero element $[(0,0,\hdots,0)]$ of $M_E$  by $z$. 
For two $\sim_E$ equivalence classes $[x]$ and $[y]$ we define $[x] + [y] = [x+y]$; it is straightforward to show that this gives a well-defined associative binary operation on the set of $\sim_E$ equivalence classes, and that $[z]$ acts as an identity element for  this operation.  We denote the resulting 
 {\it graph monoid}  $F_n / \sim_E$  by $M_E.$

 \medskip
 
\begin{definition} For any $n\geq 1$ and $0\leq j \leq n-1$, the graph monoid  $M_{C_n^j}$ is the monoid generated by $[v_1], [v_2], \hdots, [v_n]$, subject to the relations $$[v_i ]= [v_{i+1}] + [v_{i+j}]$$ 
(for all $1\leq i \leq n$), where subscripts are interpreted ${\rm mod} \ n$.  
\end{definition}
%(This description also covers the cases $n=1$ and $n=2$.) 

\begin{example}\label{monoidC42example}
As a representative example, we explicitly describe the graph monoid $M_{C_4^2}$ associated to the graph $C_4^2$.   The generators of this monoid are $[v_1],[v_2],[v_3],$ and $[v_4]$, subject to the relations
$$[v_1]=[v_2]+[v_3], \ [v_2]=[v_3]+[v_4], \ [v_3]=[v_4]+[v_1], \ \mbox{and} \ [v_4]=[v_1]+[v_2].$$
Consider the element $\sigma =[v_1]+[v_2]+[v_3]+[v_4]$ of  $M_{C_4^2}$.  Then we get 
$$[v_1] + \sigma = [v_1] +  [v_1]+[v_2]+ ([v_3]+[v_4]) = [v_1] +  [v_1]+[v_2]+ [v_2] = [v_1] +  [v_2]+[v_1]+ [v_2] = 2[v_4].$$
But upon reconsidering $[v_1]+\sigma$ we also get
$$[v_1] + \sigma = [v_1] +  [v_1]+[v_2]+ [v_3]+[v_4] =( [v_1] +  [v_4])+([v_1] + [v_2])+ [v_3] = [v_3] + [v_4] + [v_3] = [v_2]+[v_3] = [v_1].$$ 
Thus $[v_1] = [v_1] + \sigma = 2[v_4]$.   Arguing similarly on the elements $[v_i]+\sigma$ for $i = 2,3,4$, we get $[v_2] = 2[v_1], [v_3]=2[v_2],$ and $[v_4]=2[v_3]$.   But then $[v_3] = 4[v_1]$ and $[v_4]=8[v_1]$, so each of the generators of $M_{C_4^2}$  is seen to be a positive integer multiple of $[v_1]$.  Thus each element of $M_{C_4^2}$ is of the form $j[v_1]$ for some integer $j\geq 0$.  Now $$6[v_1]  = 2 [v_1]  + 4[v_1] =  2[v_1] + [v_3] = [v_2] + [v_3] = [v_1],$$
so that each element of  $M_{C_4^2}$ is of the form $[z], [v_1], 2[v_1], 3[v_1], 4[v_1], $ or $ 5[v_1]$.    Using an appropriate invariant, one can now show that these six elements are indeed distinct in $M_{C_4^2}$; however, we will establish this fact more efficiently in Remark \ref{sizeofMC42} below,  using different techniques.   (Because none of the defining relations for $\approx$ involve the element $z$, it follows immediately that $i [v_1] \neq [z]$ for $1\leq i \leq 5.$)      \hfill $\Box$
\end{example}

For an example similar to that given in Example \ref{monoidC42example},   see also \cite[p. 3]{ASch}, in which the graph monoid  $M_{C_3^2}$ is explicitly shown to be the five elements $\{[z], \ [v_1], \  [v_2], \ [v_3], \ [v_1]+[v_2]+[v_3] \}.$
%(We have not justified why these five elements are distinct in $M_{C_3}$, but this can be done easily;  see e.g. \cite[page 171]{AS}.)  

\smallskip

  We present now a streamlined version of the germane background information which will be utilized throughout the remainder of the article.  Much of this discussion appears in \cite{ASch}.  %required to achieve our main result (Theorem \ref{MainTheorem}), which yields a description of the Leavitt path algebras corresponding to the Cayley graphs $ \{C_n \ | \ n\geq 1\}$.    The cornerstone of the result is a utilization of the Algebraic Kirchberg Phillips Theorem.   To motivate and explain how this theorem is used, in the following three paragraphs we make a streamlined visit to three elegant, fundamental results in the theory of Leavitt path algebras and purely infinite simple algebras.   

For a unital $K$-algebra $A$,   the set of isomorphism classes of finitely generated projective left $A$-modules is denoted by $\mathcal{V}(A)$.  We denote the elements of  $\mathcal{V}(A)$ using brackets; for example, $[A] \in \mathcal{V}(A)$ represents the isomorphism class of the left regular module ${}_AA$.   $\mathcal{V}(A)$ is a monoid, with operation $\oplus$, and zero element $[\{0\}]$.   The monoid $(\mathcal{V}(A), \oplus)$ is {\it conical}; this means that the sum of any two nonzero elements of $\mathcal{V}(A)$ is nonzero, or, rephrased, that $\mathcal{V}(A)^* = \mathcal{V}(A) \setminus \{0\}$ is a semigroup under $\oplus$. 
A striking property of Leavitt path algebras  was established in \cite[Theorem 3.5]{AMP}, to wit:  
\begin{equation}
  \mathcal{V}(L_K(E)) \cong M_E \ \mbox{as monoids}.    \tag{$*$}
 \end{equation}
\vspace{-.2in}
$$\mbox{Moreover, } \   [L_K(E)] \leftrightarrow \sum_{v\in E^0} [v]  \ \mbox{under this isomorphism.}$$

\medskip

A unital $K$-algebra $A$ is called {\it purely infinite simple} in case $A$ is not a division ring, and $A$ has the property that for every nonzero element $x$ of  $A$ there exist $b,c\in A$ for which $bxc=1_A$.     It is shown in \cite[Corollary 2.2]{AGP} that if $A$ is  a unital purely infinite simple $K$-algebra, then the semigroup $(\mathcal{V}(A)^*, \oplus)$ is in fact a group, and, moreover, that $\mathcal{V}(A)^* \cong K_0(A)$, the Grothendieck group of $A$.     %(Indeed, for unital Leavitt path algebras, the converse is true as well: if $\mathcal{V}(L_K(E))^*$ is a group, then $L_K(E)$ is purely infinite simple.  This converse is not true for general $K$-algebras.)
   Summarizing: when $L_K(E)$ is unital purely infinite simple we have the following isomorphisms of groups:  
\begin{equation}
 K_0(L_K(E)) \cong \mathcal{V}(L_K(E))^* \cong M_E^*. \tag{$**$}
 \end{equation}
In particular, in this situation we have   $|K_0(L_K(E))| = | M_E^* |$.   Often throughout the article we will invoke this isomorphism (and the resultant equality of set sizes)  without explicit mention.   (Notationally, the elements of $M_E^*$ are somewhat easier to work with than are those of $K_0(L_K(E))$; on the other hand, many of the results in the literature related to this topic are stated in terms of $K_0(L_K(E))$.  Thus  we will typically do computations in the group $M_E^*$, and use those to immediately draw the appropriate conclusions in  $K_0(L_K(E))$.)  
 
 \medskip

The finite graphs $E$ for which the Leavitt path algebra $L_K(E)$ is purely infinite simple have been explicitly described in \cite{AAP2}, to wit: $L_K(E)$ is purely infinite simple if and only if  $E$ is cofinal, sink-free, and satisfies Condition (L).    (See e.g. \cite{AAP2} for a complete description of these terms.)   This result, together with the preceding discussion, immediately yields
\begin{proposition}\label{LKCnpis}
For each $n\geq 1$, and for each $0\leq j \leq n-1$,  the $K$-algebra $L_K(C_n^j)$ is unital purely infinite simple.  In particular, $M_{C_n^j}^* = (M_{C_n^j} \setminus \{[z]\},+)$ is a group. 
\end{proposition}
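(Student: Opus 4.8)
The plan is to apply the characterization of purely infinite simple Leavitt path algebras recorded immediately above the statement: for a finite graph $E$, the algebra $L_K(E)$ is purely infinite simple precisely when $E$ is cofinal, sink-free, and satisfies Condition (L). Since $C_n^j$ has exactly $n < \infty$ vertices, $L_K(C_n^j)$ is automatically unital (as noted in the excerpt), so the entire burden is to verify these three graph-theoretic properties for $E = C_n^j$. Two of them are immediate from the defining data of $C_n^j$, while cofinality carries the one genuine observation in the argument.

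First I would dispose of the sink-free condition: by definition every vertex $v_i$ is the source of the edge $e_i$ (indeed of both $e_i$ and $f_i$), so no vertex emits zero edges and $C_n^j$ is sink-free. Next, Condition (L) asserts that every cycle has an exit. Let $c = g_1 g_2 \cdots g_m$ be any cycle in $C_n^j$, traversing the \emph{distinct} vertices $s(g_1), \dots, s(g_m)$. Each such vertex emits two distinct edges, but $c$ uses only one outgoing edge at each vertex it visits; hence the second edge $h$ emitted at $s(g_1)$ satisfies $s(h) = s(g_1)$ yet $h \neq g_k$ for every $k$ (it differs from $g_1$, and the remaining $g_k$ have sources distinct from $s(g_1)$). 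Thus $h$ is an exit for $c$, and Condition (L) holds.

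The key step is cofinality, and here I would argue via strong connectivity. The edges $e_1, \dots, e_n$ satisfy $s(e_i) = v_i$ and $r(e_i) = v_{i+1}$ (indices mod $n$), so the path $e_1 e_2 \cdots e_n$ is a closed directed path visiting every vertex; consequently, using only the $e$-edges, there is a directed path from $v_i$ to $v_{i'}$ for every ordered pair of vertices, and $C_n^j$ is strongly connected. I would then invoke that a finite, sink-free, strongly connected graph is cofinal: in whichever standard formulation one adopts (for every vertex $v$ and every infinite path $\gamma$ there is a path from $v$ to a vertex of $\gamma$; equivalently, the only hereditary saturated subsets of $E^0$ are $\emptyset$ and $E^0$), strong connectivity makes the condition immediate, since from any $v$ one can reach every vertex, in particular any chosen vertex of $\gamma$. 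Thus $C_n^j$ is cofinal, and by the cited characterization from \cite{AAP2} the algebra $L_K(C_n^j)$ is unital purely infinite simple.

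Finally, the ``in particular'' clause follows formally from the discussion preceding the statement: by \cite[Corollary 2.2]{AGP} the semigroup $(\mathcal{V}(A)^*, \oplus)$ is a group for any unital purely infinite simple $A$, and under the isomorphism $(*)$ (together with $(**)$) this transports to $M_{C_n^j}^* = (M_{C_n^j} \setminus \{[z]\}, +)$. The main obstacle is the cofinality step; the only point deserving care there is to confirm that the $e$-edges \emph{alone} already connect all vertices, so that the particular value of $j$ plays no role in establishing strong connectivity (and hence that the argument is genuinely uniform in $j$, including the degenerate cases $n=1,2$).
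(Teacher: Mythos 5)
Your proposal is correct and follows exactly the route the paper takes: the paper's entire proof of Proposition~\ref{LKCnpis} is to invoke the characterization from \cite{AAP2} (purely infinite simple $\Leftrightarrow$ cofinal, sink-free, Condition~(L)) together with \cite[Corollary 2.2]{AGP} and the isomorphism $(*)$, declaring the graph-theoretic hypotheses immediate. You have simply written out the verification that the paper leaves implicit (two edges at each vertex give sink-freeness and exits for cycles; the $e$-edges alone give strong connectivity, hence cofinality), and that verification is sound, including in the degenerate cases $n=1,2$ and $j=0,1$.
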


\begin{example}\label{groupofmonoidC42example}
We revisit the monoid $M_{C_4^2}$  described in Example \ref{monoidC42example}, in which we showed that every element of  $M_{C_4^2}$ is of the form $[z], [v_1], 2[v_1], 3[v_1], 4[v_1], $ or $ 5[v_1]$.  This yields that $M_{C_4^2}^* = \{ [v_1], 2[v_1], 3[v_1], 4[v_1],  5[v_1]\} \cong \Z_5$, and that $5[v_1]$ is the identity element of  $M_{C_4^2}^*$.  

We recall that it was established in Example \ref{monoidC42example} that $[v_1] + \sigma = [v_1]$, where $\sigma$ is the element $[v_1]+[v_2]+[v_3]+[v_4]$ of $M_{C_4^2}^*$.   But since we have now established that  $M_{C_4^2}^*$ is in fact a group, this equation implies that $\sigma$ is the identity element of   $M_{C_4^2}^*$. \hfill $\Box$ 
\end{example}

 Additionally, referring to the explicit description of $M_{C_3^2}$ given in \cite{ASch}, we see that $M_{C_3^2}^* = \{ [v_1], \  [v_2], \ [v_3], \ [v_1]+[v_2]+[v_3] \} \cong \Z_2 \times \Z_2,$ and that $ [v_1]+[v_2]+[v_3 ]$ is the identity element of the group $M_{C_3^2}^*$.  

The observations made in the previous two paragraphs regarding the element $\sum_{i=1}^n [v_i]$ of $M_{C_n^j}^*$ in fact holds in general.

\begin{proposition}
\label{sumverticesisidentity}
For each $n\in \mathbb{N}$ and $0\leq j \leq n-1$, $\sum_{i=1}^n [v_i ]$ is the identity element of the group  $M_{C_n^j}^*$.
\end{proposition}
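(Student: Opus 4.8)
The plan is to exploit the fact, furnished by Proposition \ref{LKCnpis}, that $M_{C_n^j}^*$ is a \emph{group}; in a group an element $\sigma$ equals the identity as soon as it satisfies $\sigma + \sigma = \sigma$, since one may then cancel a copy of $\sigma$. So, writing $\sigma = \sum_{i=1}^n [v_i]$, it suffices to establish two things: that $\sigma$ genuinely lies in $M_{C_n^j}^*$ (that is, $\sigma \neq [z]$), and that $\sigma = 2\sigma$ in $M_{C_n^j}$.

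For the second (and essentially the only computational) point, I would simply sum the defining relations $[v_i] = [v_{i+1}] + [v_{i+j}]$ over all $1\leq i \leq n$, obtaining
$$\sigma = \sum_{i=1}^n [v_i] = \sum_{i=1}^n [v_{i+1}] + \sum_{i=1}^n [v_{i+j}].$$
Since the subscripts are read mod $n$, each of the maps $i \mapsto i+1$ and $i \mapsto i+j$ is a bijection of $\{1,\dots,n\}$ onto itself, so each sum on the right is merely a reindexing of $\sigma$. Hence the right-hand side equals $\sigma + \sigma = 2\sigma$, giving $\sigma = 2\sigma$. Because $M_{C_n^j}^*$ is a group, adding the inverse $-\sigma$ to both sides collapses this identity to the assertion that $\sigma$ is the identity element, exactly as was observed directly for $M_{C_4^2}$ in Example \ref{groupofmonoidC42example} and for $M_{C_3^2}$ thereafter.

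The remaining point, $\sigma \neq [z]$, is where the only real care is needed, and it is handled by the conicality of the graph monoid. Recall that $M_{C_n^j} \cong \mathcal{V}(L_K(C_n^j))$ via $(*)$, and that $\mathcal{V}$ of any $K$-algebra is conical, so that a sum of nonzero elements of $M_{C_n^j}$ is again nonzero. Since none of the defining relations involve $z$, each generator $[v_i]$ is distinct from $[z]$; therefore their sum $\sigma$ is nonzero, i.e. $\sigma \in M_{C_n^j}^*$, and the cancellation carried out above is legitimate. I expect no genuine obstacle: the whole argument reduces to the reindexing observation combined with cancellation in the now-established group, the lone subtlety being the routine verification that $\sigma$ is not the zero element of the monoid.
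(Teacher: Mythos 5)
Your proof is correct and follows essentially the same route as the paper: sum the defining relations over all $i$, observe that both resulting sums are reindexings of $\sigma$, and cancel $\sigma$ in the group $M_{C_n^j}^*$ to conclude $\sigma+\sigma=\sigma$ forces $\sigma$ to be the identity. Your added check that $\sigma\neq[z]$ via conicality is a small point the paper leaves implicit, but it does not change the argument.
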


{\bf Proof}:   Let $\sigma$ denote  $\sum_{i=1}^n [v_i ]$.  Using the defining relations in $M_{C_n^j}^*$, we have  
$$ \sigma = \sum_{i=1}^n [v_i ] = \sum_{i=1}^n ([v_{i+1} ] + [v_{i+j} ]) = \sum_{i=1}^n [v_{i+1} ] + \sum_{i=1}^n [v_{i+j} ] = \sigma + \sigma,$$
as clearly both of the final two summation expressions also give the sum over all vertices of the germane equivalence classes.       But the equation $\sigma +\sigma =\sigma$ in a group yields immediately that $\sigma$ is the identity element.   \hfill $\Box$

\medskip

We now have the necessary background information in hand which allows us to present the powerful tool which will yield a number of key results in Sections \ref{SectionC^0andC^1} and \ref{SectionC^2}.    

\medskip

 {\bf The Algebraic KP Theorem.} \cite[Corollary 2.7]{ALPS}  
Suppose $E$ and $F$ are finite graphs for which the Leavitt path algebras $L_K(E)$ and $L_K(F)$ are purely infinite simple.   Suppose that there is an isomorphism $\varphi : K_0(L_K(E)) \rightarrow K_0(L_K(F))$ for which $\varphi([L_K(E)]) = [L_K(F)]$, and suppose also that the two integers ${\rm det}(I_{|E^0|}~-~A_E^t)$ and ${\rm det}(I_{|F^0|}~-~A_F^t)$ have the same sign (i.e., are either both nonnegative, or  both nonpositive).    Then $L_K(E) \cong L_K(F)$ as $K$-algebras.

\smallskip

\begin{remark} The proof of the Algebraic KP Theorem utilizes deep results and ideas in the theory of symbolic dynamics. The letters K and P in its name derive from E. Kirchberg and N.C. Phillips, who (independently in 2000) proved an analogous result for graph C$^*$-algebras.  We note that, as of Fall 2013, it is not known whether the hypothesis regarding the germane  determinants can be eliminated from the statement of The Algebraic KP Theorem.   
%On the other hand, is the case that the determinant hypothesis {\it can} be eliminated in the analogous graph C$^*$-algebra result established by Kirchberg and Phillips.     Thus \cite[Corollary 2.7]{ALPS} is sometimes called the `Restricted' Algebraic Kirchberg Phillips  Theorem.
\end{remark}

\section{The integers $|K_0(L_K(C_n^j))|$ and ${\rm det}(I-A_{C_n^j}^t)$}\label{Haselgrovenumbers}

Let $E$ be a finite directed graph for which $|E^0| = n$, and let $A_E$ denote the usual incidence matrix of $E$.   
% Let $B_E$ denote the matrix $I_n - A_E^t$. 
%(where $I_n$ is the identity $n \times n$ matrix, and $( \ )^t$ denotes the transpose of a matrix).    
We view $I_n - A_E^t$ both as a matrix, and as a linear transformation $I_n - A_E^t: \Z^n \rightarrow \Z^n$,  via left multiplication (viewing elements of $\Z^n$ as column vectors).   
In the situation where $L_K(E)$ is purely infinite simple, so that in particular $M_E^*$ is a group (necessarily isomorphic to $K_0(L_K(E))$), we have that  $$K_0(L_K(E)) \cong M_E^* \cong \Z^n / {\rm Im }(I_n - A_E^t) = {\rm Coker}(I_n - A_E^t).$$
 Under this isomorphism   $[v_i ] \mapsto \vec{b_i} + {\rm Im }(I_n - A_E^t)$, where  $\vec{b_i}$ is the element of $\Z^n$ which is $1$ in the $i^{th}$ coordinate and $0$ elsewhere.  
Moreover, if $K_0(L_K(E))$ is finite, then an analysis of the Smith normal form of the matrix $I_n - A_E^t$ yields
\begin{equation}
|  K_0(L_K(E)) | = | {\rm det}(I_n - A_E^t) |.  \tag{$***$}
 \end{equation}
Conversely, $K_0(L_K(E))$ is infinite if and only if ${\rm det}(I_n - A_E^t) = 0.$  (See \cite[Section 3]{AALP} for a complete discussion.) 

\smallskip

  An $n\times n$ matrix $B = (b_{i,j})$ is {\it circulant} in case  $b_{i+1, j+1} = b_{i,j}$ for  $1\leq i \leq n-1$, $1\leq j \leq n-1$;  $b_{1,j+1} = b_{n,j}$ for $j+1 \leq n$; $b_{i+1,1} = b_{i,n}$ for $i+1\leq n$; and $b_{1,1}=b_{n,n}$.   That is, $B$ is circulant in case each subsequent row of $B$ is obtained from the previous row by moving each entry of the previous row one place to the right, and moving the last entry of the previous row to the first position of the subsequent row.   (The last row gets moved to the first row in this way as well.)   If $B$ is circulant, then 
%there is a formula (derived from an analysis of the eigenvectors of $B$) which expresses  ${\rm det}(B) $ as the following product:  
$$ \det(B) = \prod\limits_{\ell=0}^{n-1}(b_1+b_{2}\omega_\ell+b_{3}\omega_\ell^2+\dotsb + b_n\omega_\ell^{n-1})$$ where  $(b_1 \ b_2 \ b_3 \ \cdots \ b_n)$ is the first row of $B$, and $\omega_\ell =  e^{\frac{2\pi i \ell }{n}}$ (for $1\leq \ell \leq n$) are the $n$ distinct  $n^{th}$ roots of unity in $\mathbb{C}$.    (See e.g. \cite{circulantref}.) 
%, and $\omega_j=\exp (\frac{2\pi ij}{n})$.
%and $i=\sqrt{-1}$, the imaginary unit.\\[4mm]

In the case of the  Cayley graph $C_n^j$ (for $n\geq 3$ and $2\leq j \leq n-1$),  the matrix $B =  I_n-A_{C_n^j}^t$ is easily seen to be circulant, with  $b_1=1$, $b_2=b_{1+j}=-1$, and $b_\ell=0$ for all  $3 \leq \ell \leq n$ with $\ell \neq 1+j$.  Using 
%that $e^{i\theta} =\cos\theta+i\sin\theta$ together with
 the displayed equation,  
we get
\begin{proposition}\label{determinantasproduct} For all $n\in \mathbb{N}$ and all $0\leq j\leq n-1$ we have
$$\det(I_n-A_{C_n^j}^t) \  = \ \prod\limits_{\ell=0}^{n-1}(1-\omega_\ell-\omega_\ell^{j}).
$$
\end{proposition}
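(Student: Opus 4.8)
The plan is to exploit the circulant structure exhibited just above, but to run the computation with the transpose-free matrix $I_n - A_{C_n^j}$ rather than with $I_n - A_{C_n^j}^t$. Since a square matrix and its transpose have equal determinants, $(I_n - A_{C_n^j})^t = I_n - A_{C_n^j}^t$, so
$$\det(I_n - A_{C_n^j}^t) = \det\big((I_n - A_{C_n^j})^t\big) = \det(I_n - A_{C_n^j}),$$
and it suffices to evaluate the last quantity. This reformulation has the practical advantage of letting me treat the three ranges $j=0$, $j=1$, and $2\le j\le n-1$ in one stroke, instead of splitting off the two small values of $j$ for which the relevant circulant entries collide.

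First I would record the first row of $A = A_{C_n^j}$. By definition the entry $a_{p,q}$ counts the edges $v_p \to v_q$, namely $e_p$ (to $v_{p+1}$) and $f_p$ (to $v_{p+j}$), so $a_{p,q} = [\,q \equiv p+1\,] + [\,q \equiv p+j\,]$, with indices read mod $n$. Comparing row $p+1$ to row $p$ shows at once that $A$ is circulant, and hence so is $I_n - A$. Writing the first row of $I_n - A$ in $0$-indexed form $(c_0, c_1, \dots, c_{n-1})$, one gets $c_k = \delta_{k,0} - [\,k \equiv 1\,] - [\,k \equiv j\,]$ (all congruences mod $n$); equivalently, the coefficient of $\omega_\ell^0$ is $1$, the coefficient of $\omega_\ell^1$ picks up $-1$, and the coefficient of $\omega_\ell^{j}$ picks up $-1$. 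The point of the uniform treatment is that when $j=1$ the latter two coincide into a single entry $-2$, while when $j=0$ the $\omega_\ell^{j}=\omega_\ell^0$ contribution merges with the leading $1$; in every case the associated circulant polynomial is, as a formal expression, exactly $1 - \omega_\ell - \omega_\ell^{j}$.

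Now I would apply the displayed circulant determinant formula to $B = I_n - A_{C_n^j}$ with this first row, obtaining
$$\det(I_n - A_{C_n^j}) = \prod_{\ell=0}^{n-1}\big( c_0 + c_1 \omega_\ell + \dots + c_{n-1}\omega_\ell^{n-1}\big) = \prod_{\ell=0}^{n-1}\big(1 - \omega_\ell - \omega_\ell^{j}\big).$$
Combining this with the transpose identity of the first paragraph yields the claimed value of $\det(I_n - A_{C_n^j}^t)$, for every $n\in\mathbb{N}$ and every $0\le j\le n-1$ (the edge cases $n=1,2$ being absorbed without change, since a $1\times 1$ or $2\times 2$ matrix is trivially circulant).

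The only genuinely delicate part is the bookkeeping. One must check that the single index $k$ in the circulant polynomial faithfully records both the cyclic position of each edge and the coincidences that occur for $j\in\{0,1\}$, and one must be sure that passing to the transpose does not disturb the product. Routing through $I_n - A_{C_n^j}$ and the determinant–transpose identity handles the second concern automatically; had I instead worked directly with $I_n - A_{C_n^j}^t$, I would have produced $\prod_\ell(1 - \omega_\ell^{-1} - \omega_\ell^{-j})$ and would then need the substitution $\omega_\ell \mapsto \omega_\ell^{-1} = \omega_{n-\ell}$, which merely permutes the factors, to recover the stated form. Both steps are elementary, but each is easy to get slightly wrong, which is precisely why I prefer the transpose-free route.
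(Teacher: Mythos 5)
Your proof is correct and follows essentially the same route as the paper: identify the matrix as circulant and apply the displayed circulant determinant formula. Your extra bookkeeping is in fact slightly more careful than the paper's one-line argument, which simply asserts that $I_n-A_{C_n^j}^t$ is circulant with first row $b_1=1$, $b_2=b_{1+j}=-1$ (literally the first row of $I_n-A_{C_n^j}$, so your transpose/$\omega_\ell\mapsto\omega_\ell^{-1}$ remark is exactly the point being glossed over) and leaves the cases $j=0,1$ and $n=1,2$ to the reader.
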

We note that it is easy to establish that the  equation given in Proposition \ref{determinantasproduct} also holds  in the cases $n=1$ and $n=2$, as well as in the cases   $j=0$ and $j=1$. 

The integer $|\prod\limits_{\ell=0}^{n-1}(1-\omega_\ell-\omega_\ell^{j}) \ |$ was investigated in \cite{H}, with an eye towards establishing a connection between a resolution of Fermat's Last Theorem (at the time, of course, Fermat's Last {\it Conjecture})  and some integers which share properties of the Mersenne numbers. 

     In that context,  the integers $a_k(n) = \prod\limits_{\ell=0}^{n-1}(\omega_\ell^{k} + \omega_\ell -1 )$ are introduced  in \cite[Formula (7)]{H}.  Clearly Proposition \ref{determinantasproduct} yields  $|a_k(n)| =|\det(I_n-A_{C_n^k}^t)|$.   For Haselgrove (\cite[p 21]{H}), `` ...   the sign of the numbers $a_k(n)$ is irrelevant to the subject."  (Indeed, the  numbers $a_k(n)$ may be positive, negative, or zero, depending on the values of $n$ and $k$.  For instance, the signs of the integers in the sequence $a_2(n)$ alternate each term.)    In stark contrast, we will be keenly interested in the sign of $\det(I_n-A_{C_n^k}^t)$.  

\begin{definition}\label{Haselgrovedefinition}
 For notational convenience, and with Haselgrove's work  in mind, we will often denote  the integer $| \det(I_n-A_{C_n^k}^t) |$ by  $H_k(n)$, and, for fixed $k$, will refer to the sequence 
 $$H_k(1), H_k(2), H_k(3), ...$$ as the {\it $k^{th}$ Haselgrove sequence}.  
\end{definition}
% by setting $$H_j(n) =  \det(I_n-A_{C_n^j}^t) = \prod\limits_{\ell=0}^{n-1}(1-\omega_\ell-\omega_\ell^{j}).$$

By  equation $(***)$, we immediately get 

\begin{proposition}\label{sizeofK0isH(n)}
Let $n\in \mathbb{N},$ and $0\leq j \leq n-1$. 

\smallskip

(1)  If $H_j(n)>0$, then
$ |K_0(L_K(C_n^j))| = H_j(n).$

\smallskip

(2)  If $H_j(n)=0$, then $K_0(L_K(C_n^j))$ is infinite.

\end{proposition}

\begin{remark}\label{sizeofMC42}
As a specific consequence of Proposition \ref{sizeofK0isH(n)}, we see that the group $M_{C_4^2}^* \cong K_0(L_K(C_4^2))$ of Example \ref{groupofmonoidC42example} contains $H_2(4)$ elements.  It is shown in \cite{H} (and is also demonstrated in Section \ref{SectionC^2}) that $H_2(4)=5$, thus verifying the assertion made at the end of Example \ref{monoidC42example}. \hfill $\Box$
\end{remark} 
\medskip

With the observation that $|a_j(n)| =|\det(I_n-A_{C_n^j}^t)|$ in mind, it turns out that, in contrast to the fact that the sign of   $a_j(n)$  depends on the specific values of $n$ and $j$, we obtain the following.  

\begin{proposition}\label{detisnegative}   For each $n\geq 1$ and $0\leq j \leq n-1$, 
$$\det(I_n-A_{C_n^j}^t) \leq 0.$$
In particular, $\det(I_n-A_{C_n^j}^t) = -H_j(n)$ for every such pair $n,j$. 
\end{proposition}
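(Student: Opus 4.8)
The plan is to exploit the product formula for the determinant established in Proposition \ref{determinantasproduct}, combined with the elementary fact that the polynomial $p(x) = 1 - x - x^j$ has real (indeed integer) coefficients, so that its values at complex-conjugate arguments are themselves complex conjugates. Write $\det(I_n - A_{C_n^j}^t) = \prod_{\ell=0}^{n-1} p(\omega_\ell)$, where $\omega_\ell = e^{2\pi i \ell/n}$ ranges over the $n$-th roots of unity. The key structural observation is that $\overline{\omega_\ell} = \omega_{n-\ell}$, so the non-real roots of unity split into complex-conjugate pairs $\{\omega_\ell, \omega_{n-\ell}\}$.

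First I would handle the conjugate pairs. For each pair $\{\omega_\ell, \omega_{n-\ell}\}$ (that is, every $\ell$ with $1 \leq \ell \leq n-1$ and, when $n$ is even, $\ell \neq n/2$), since $p$ has real coefficients we have $p(\omega_{n-\ell}) = p(\overline{\omega_\ell}) = \overline{p(\omega_\ell)}$, whence $p(\omega_\ell)\,p(\omega_{n-\ell}) = |p(\omega_\ell)|^2 \geq 0$. Therefore the product of all factors arising from conjugate pairs is a nonnegative real number, and contributes nothing to the sign of the total product (beyond possibly making it zero).

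It then remains to inspect the factors coming from the real roots of unity. The root $\omega_0 = 1$ occurs for every $n$ and contributes $p(1) = 1 - 1 - 1 = -1 < 0$. When $n$ is even the root $-1$ also occurs, contributing $p(-1) = 1 - (-1) - (-1)^j = 2 - (-1)^j$, which equals $1$ or $3$ according to the parity of $j$ and is in either case strictly positive. Collecting these pieces, the entire product equals $(-1)$ times a strictly positive factor (present exactly when $n$ is even) times a nonnegative factor (from the conjugate pairs); hence it is $\leq 0$. If some $p(\omega_\ell)$ vanishes the product is $0$, which is still $\leq 0$. This yields $\det(I_n - A_{C_n^j}^t) \leq 0$, and together with Definition \ref{Haselgrovedefinition} gives the stated equality $\det(I_n - A_{C_n^j}^t) = -H_j(n)$.

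The argument is largely routine once the conjugate-pairing is in place; the only step demanding a little care is the bookkeeping of precisely which real roots of unity appear --- namely $1$ for all $n$, and $-1$ exactly when $n$ is even --- and the verification that their individual contributions carry the asserted signs. The degenerate cases $n = 1, 2$ and $j = 0, 1$, which lie outside the circulant computation but are covered by the remark following Proposition \ref{determinantasproduct}, can be checked by direct substitution into the product formula and present no obstacle.
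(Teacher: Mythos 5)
Your proposal is correct and follows essentially the same route as the paper's proof: both decompose the product from Proposition \ref{determinantasproduct} into conjugate pairs (each contributing a nonnegative factor $|p(\omega_\ell)|^2$), the factor $p(1)=-1$, and, for even $n$, the factor $p(-1)\in\{1,3\}$. The only cosmetic difference is that you justify $p(\omega_{n-\ell})=\overline{p(\omega_\ell)}$ by the realness of the coefficients of $p$, whereas the paper verifies the same identity by writing out the real and imaginary parts trigonometrically.
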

{\bf Proof.}   By Proposition \ref{determinantasproduct}  we get 
\begin{equation*}
\begin{split}
\det(I_n-A_{C_n^j}^t) 
%& = \prod\limits_{j=0}^{n-1}(b_1+c_{n-1}\omega_j+c_{n-2}\omega_j^2+\dotsb + c_1\omega_j^{n-1})\\
& = \ \prod\limits_{\ell=0}^{n-1}(1-\omega_\ell-\omega_\ell^{j}) \ = \ 
 \prod\limits_{\ell=0}^{n-1}(1-e^{\frac{2\pi i\ell}{n}}-e^{\frac{2\pi i\ell j}{n}})\\
& = \  \prod\limits_{\ell=0}^{n-1}(1-\cos\frac{2\pi \ell}{n}- i\sin\frac{2\pi \ell}{n}-\cos\frac{2\pi \ell j}{n}-i\sin\frac{2\pi \ell j}{n}).\\
%& = \ \prod\limits_{\ell=0}^{n-1}(1-2\cos\frac{2\pi}{n}\ell),\\
\end{split}
\end{equation*}
\noindent
For each $0\leq \ell \leq n-1$, write 
$$z_\ell = 1-\cos\frac{2\pi \ell}{n}- i\sin\frac{2\pi \ell}{n}-\cos\frac{2\pi \ell j}{n}-i\sin\frac{2\pi \ell j}{n} \ \  \mbox{ as }  \ \ z_\ell = x_\ell + i y_\ell,$$
 where $x_\ell =  1-\cos\frac{2\pi \ell}{n}-\cos\frac{2\pi \ell j}{n}$ and $y_\ell = - \sin\frac{2\pi \ell}{n} - \sin\frac{2\pi \ell j}{n}$.  By basic trigonometry we see that $z_\ell = \overline{z_{n-\ell}}$ (complex conjugate) for $1 \leq \ell \leq n-1$.  We clearly get  $z_0 = -1$.  Furthermore, if $n-1$ is odd, then $z_{\frac{n}{2}} = 3$ (if $j$ is odd), while $z_{\frac{n}{2}} = 1$ (if $j$  is even).   The upshot is that the final displayed product equals the integer $-1$, multiplied by  the product of paired complex conjugates, and then (if $n-1$ is odd) multiplied by the positive integer $1$ or $3$.    The displayed conclusion of the Proposition  follows immediately, and yields the final statement as well.     \hfill $\Box$

\begin{remark}\label{K0canbeinfinite}
We note that possibility (2) of Proposition \ref{sizeofK0isH(n)} can indeed occur, for instance,  if $n=6$ and $j=5$. An infinite collection of examples for which  the group $K_0(L_K(C_n^j))$ is infinite is described in \cite{ASch}.
\end{remark}

%%%%%%%%% SECTION %%%%%%%%%%%%%%%%%%%%%%%

\section{Leavitt path algebras of the graphs $C_n^0$ and $C_n^1$}\label{SectionC^0andC^1}

In this section we describe the Leavitt path algebras $\{L_K(C_n^0) \ | \ n\in \mathbb{N}\}$ and $\{L_K(C_n^1) \ | \ n\in \mathbb{N}\}$.   Although the analysis of these two collections of algebras is not as intricate as the analysis required to fully describe the algebras $\{L_K(C_n^2) \ | \ n\in \mathbb{N}\}$ (which we carry out in Section \ref{SectionC^2}), the ideas presented in this section will be of interest in their own right, and will help clarify the broader picture.

We begin by reminding the reader about properties of the classical {\it Leavitt algebras} $L_K(1,m)$.    For any integer $m\geq 2$, $L_K(1,m)$  is the free associative $K$-algebra  in $2m$ generators $x_1, x_2, \dots, x_m$, $y_1, y_2, \dots, y_m$, subject to the relations
$$ y_i x_j = \delta_{i,j}1_K \ \ \mbox{and} \ \ \sum_{i=1}^m x_i y_i = 1_K.$$
These algebras were first defined and investigated in \cite{L}, and formed the motivating examples for the more general notion of  Leavitt path algebra.  It is easy to see  that for $m\geq 2$,  if $R_m$ is the graph having one vertex and $m$ loops (the ``rose with $m$ petals" graph), then $L_K(R_m) \cong L_K(1,m)$.    It is clear from the description given in Section \ref{Background}    that each $L_K(R_m)$ is purely infinite simple. It is straightforward from $(**)$ that  $K_0(L_K(R_m)) \cong M_{R_m}^*$ is the cyclic group $\Z_{m-1}$, where the regular module $[L_K(R_m)]$ in $K_0(L_K(R_m))$ corresponds to $ 1$ in  $\Z_{m-1}$.

As we shall see, all of the groups of the form $M_{C_n^0}^*$ or $M_{C_n^1}^*$, 
%(for $n\geq 1$), 
and some of the groups of the form $M_{C_n^2}^*$,  are cyclic.       Purely infinite simple unital Leavitt path algebras $L_K(E)$ whose corresponding $K_0$ groups are cyclic and for which ${\rm det}(I_{|E^0|}  - A_E^t) \leq 0$ are relatively well-understood, and arise as matrix rings over the Leavitt algebras $L_K(1,m)$, as follows.  Let $d \geq 2$, and consider the graph $R_m^d$ having two vertices $v_1, v_2$; $d-1$ edges from $v_1$ to $v_2$; and $m$ loops at $v_2$:  
$$  R_m^d \ = \   \xymatrix{
  \bullet^{v_1}  \ar[r]^{(d-1)}
 & \bullet^{v_2}  \ar@(ur,dr)[]^{(m)}  } \ \ \ \  \
$$
   It is shown in \cite{AALP} that  the matrix algebra ${\rm M}_d(L_K(1,m))$ is isomorphic to $L_K(R_m^d)$.  By standard Morita equivalence theory we have that 
   %this implies that 
   $K_0({\rm M}_d(L_K(1,m))) \cong K_0(L_K(1,m)).$
   % \cong \Z / (n-1)\Z$. 
   Moreover, the element $[{\rm M}_d(L_K(1,m))]$ of $K_0({\rm M}_d(L_K(1,m)))$  corresponds to the element $d$ in  $\Z_{m-1}$.   In particular,  the element $[{\rm M}_{m-1}(L_K(1,m))]$ of $K_0({\rm M}_{m-1}(L_K(1,m)))$  corresponds to $m-1 \equiv 0$ in  $\Z_{m-1}$.    Finally, an easy computation yields that ${\rm det} (I_2 - A_{R_m^d}^t) = -(m-1) \leq 0 $ for all $m,d$.    
Therefore, by invoking the Algebraic KP Theorem, the previous discussion immediately yields the following.  

\begin{proposition}\label{isotomatrixoverLeavittprop}
Suppose that $E$ is a graph for which $L_K(E)$ is unital purely infinite simple.  Suppose that $M_E^*$ is isomorphic to the cyclic group  $\Z_{m-1}$, via an isomorphism which takes the element $ \sum_{v\in E^0}[v]$ of $M_E^*$  to the element $d$ of   $\Z_{m-1}$.  Finally, suppose that ${\rm det} (I_{|E^0|} - A_E^t) \leq 0.$  Then
$L_K(E) \cong {\rm M}_d(L_K(1,m)).$ 
\end{proposition}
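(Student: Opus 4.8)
The plan is to prove the statement by a direct application of the Algebraic KP Theorem, taking $F = R_m^d$ as the comparison graph. Recall from the discussion preceding the Proposition that $L_K(R_m^d) \cong {\rm M}_d(L_K(1,m))$, so it suffices to establish an isomorphism $L_K(E) \cong L_K(R_m^d)$. To invoke the Algebraic KP Theorem I must check three things: that both $L_K(E)$ and $L_K(R_m^d)$ are unital purely infinite simple; that there is an isomorphism $\varphi : K_0(L_K(E)) \to K_0(L_K(R_m^d))$ carrying $[L_K(E)]$ to $[L_K(R_m^d)]$; and that ${\rm det}(I_{|E^0|} - A_E^t)$ and ${\rm det}(I_2 - A_{R_m^d}^t)$ have the same sign.

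The first and third conditions are essentially already in hand. Pure infinite simplicity of $L_K(E)$ is a hypothesis, while $R_m^d$ is cofinal, sink-free, and satisfies Condition (L) (the multiple loops at $v_2$ give every cycle an exit, and every vertex connects to $v_2$), so $L_K(R_m^d)$ is purely infinite simple by the criterion of \cite{AAP2}. For the determinants, the hypothesis supplies ${\rm det}(I_{|E^0|} - A_E^t) \leq 0$, and the computation recalled above gives ${\rm det}(I_2 - A_{R_m^d}^t) = -(m-1) \leq 0$; hence both integers are nonpositive, and the sign condition holds.

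The crux of the argument is the construction of $\varphi$. First I would identify $K_0(L_K(E)) \cong M_E^* \cong \Z_{m-1}$ using $(**)$ together with the hypothesized isomorphism, and recall that under $(**)$ the distinguished class $[L_K(E)]$ corresponds to $\sum_{v \in E^0}[v]$; by hypothesis this element is carried to $d \in \Z_{m-1}$. On the other side, $K_0(L_K(R_m^d)) \cong K_0({\rm M}_d(L_K(1,m))) \cong \Z_{m-1}$ by Morita equivalence, and the preceding discussion records that the regular class $[{\rm M}_d(L_K(1,m))] = [L_K(R_m^d)]$ corresponds to the same element $d \in \Z_{m-1}$. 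Composing the isomorphism $K_0(L_K(E)) \to \Z_{m-1}$ with the inverse of $K_0(L_K(R_m^d)) \to \Z_{m-1}$ therefore yields an isomorphism $\varphi$ sending both $[L_K(E)]$ and $[L_K(R_m^d)]$ to $d$, so that $\varphi([L_K(E)]) = [L_K(R_m^d)]$.

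With all three hypotheses verified, the Algebraic KP Theorem gives $L_K(E) \cong L_K(R_m^d) \cong {\rm M}_d(L_K(1,m))$, as desired. The only genuinely delicate point is the tracking of the distinguished regular-module class through both chains of isomorphisms so that $\varphi$ matches them up; everything else is bookkeeping assembled from results already established.
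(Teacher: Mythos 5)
Your proposal is correct and follows exactly the route the paper intends: the paper gives no separate proof but states that the proposition follows "by invoking the Algebraic KP Theorem" from the preceding discussion, which is precisely your verification that $R_m^d$ yields a purely infinite simple algebra, that both determinants are nonpositive, and that both regular-module classes land on $d \in \Z_{m-1}$ so the composite isomorphism matches them. Your write-up simply makes explicit the bookkeeping the paper leaves implicit.
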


\begin{corollary}\label{corollarytogetmatrixoverLeavitt}
 Suppose $n\in \mathbb{N}$ and $0\leq j \leq n-1$ are integers for which  the group $K_0(L_K(C_n^j))$ is isomorphic to the cyclic group   $\Z_{m-1}$ for some positive integer $m$.  Then   $L_K(C_n^j) \cong {\rm M}_{m-1}(L_K(1,m)).$
\end{corollary}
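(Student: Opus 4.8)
The plan is to deduce the corollary as a direct application of Proposition \ref{isotomatrixoverLeavittprop}, taking the graph $E$ there to be $C_n^j$. Thus essentially all of the work reduces to checking that the three hypotheses of that proposition are satisfied for this particular graph, and—crucially—to identifying the correct value of the integer $d$. First I would invoke Proposition \ref{LKCnpis}, which tells me directly that $L_K(C_n^j)$ is unital purely infinite simple, so the opening hypothesis of Proposition \ref{isotomatrixoverLeavittprop} holds. Next, combining the isomorphism $(**)$ with the standing assumption of the corollary, I obtain $M_{C_n^j}^* \cong K_0(L_K(C_n^j)) \cong \Z_{m-1}$, so the group $M_{C_n^j}^*$ is cyclic of exactly the form the proposition requires.

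The only real content beyond citing earlier results is pinning down $d$, that is, locating the image of $\sum_{i=1}^n [v_i]$ under this isomorphism. Here I would apply Proposition \ref{sumverticesisidentity}, which asserts that $\sum_{i=1}^n [v_i]$ is the \emph{identity} element of the group $M_{C_n^j}^*$. Since every group isomorphism sends identity to identity, its image in $\Z_{m-1}$ must be $0 \equiv m-1 \pmod{m-1}$; consulting the discussion preceding Proposition \ref{isotomatrixoverLeavittprop} confirms that the residue $m-1$ is precisely the one corresponding to $[{\rm M}_{m-1}(L_K(1,m))]$. Hence the hypothesis of Proposition \ref{isotomatrixoverLeavittprop} is met with $d = m-1$. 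Finally, the sign condition on the determinant is exactly the conclusion of Proposition \ref{detisnegative}, which guarantees $\det(I_n - A_{C_n^j}^t) \leq 0$ for every admissible pair $(n,j)$. With all three hypotheses in hand and $d = m-1$, Proposition \ref{isotomatrixoverLeavittprop} yields $L_K(C_n^j) \cong {\rm M}_{m-1}(L_K(1,m))$, as claimed.

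I do not anticipate a genuine obstacle, since every ingredient has already been established in the excerpt; the corollary is really an assembly of Propositions \ref{LKCnpis}, \ref{sumverticesisidentity}, and \ref{detisnegative} fed into Proposition \ref{isotomatrixoverLeavittprop}. If anything demands care, it is the bookkeeping that fixes $d = m-1$ rather than some other residue: one must remember that under the isomorphism $(*)$ the class $[L_K(E)]$ corresponds to $\sum_{v \in E^0}[v]$, so the element being tracked through $M_{C_n^j}^* \cong \Z_{m-1}$ is exactly the one that governs the matrix size in the target, and that in $\Z_{m-1}$ the identity is represented by $m-1$. Getting this identification right is what guarantees that the matrix size matches the cyclic order, and it is the sole point where the argument could go astray.
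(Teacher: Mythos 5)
Your proposal is correct and follows the paper's own proof essentially verbatim: both arguments feed Propositions \ref{detisnegative} and \ref{sumverticesisidentity} into Proposition \ref{isotomatrixoverLeavittprop}, with the identity element of $M_{C_n^j}^*$ corresponding to $d = m-1$ in $\Z_{m-1}$. Your additional explicit citation of Proposition \ref{LKCnpis} for pure infinite simplicity is a harmless (indeed careful) inclusion of a hypothesis the paper leaves implicit.
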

{\bf Proof}.   By Proposition \ref{detisnegative} we have that ${\rm det}(I_n - A_{C_n^j}^t) \leq 0$, and by Proposition \ref{sumverticesisidentity} we have that the element $ \sum_{v\in (C_n^j)^0}[v]$  is the zero element of $M_{C_n^j}^*$; that is, $ \sum_{v\in (C_n^j)^0}[v]$  corresponds to the element $m-1$ of  $\Z_{m-1}$.   The result now follows from Proposition \ref{isotomatrixoverLeavittprop}.   \hfill $\Box$

\subsection{Leavitt path algebras of the graphs $C_n^0$}

Let $n\in \mathbb{N}$.   The generating relations for  $M_{C_n^0}^*$ are given by
$$[v_i] = [v_{i+1}] + [v_i]$$
for $1\leq i \leq n$, where subscripts are interpreted ${\rm mod}\ n$.  But as $M_{C_n^0}^*$ is a group, we can cancel in each of these relations to get 
$$[v_{i+1}] = 0 \ \ \mbox{in} \ M_{C_n^0}^*$$
for all $i$.   Since each of the generating elements of $M_{C_n^0}^*$ is therefore $0$, we conclude
%\begin{proposition}\label{MC0iszero}
that for each $n\geq 1$, the group $M_{C_n^0}^*$ is the one element group $\{0\}$. 
%\end{proposition}
Now applying 
Corollary \ref{corollarytogetmatrixoverLeavitt}, 
% together with Proposition \ref{detisnegative}, 
we get
\begin{proposition}\label{LeavittofCn0}
For each $n\geq 1$,
$$L_K(C_n^0)\cong L_K(1,2).$$
\end{proposition}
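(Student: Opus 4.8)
The plan is to combine the group computation carried out immediately before the statement with Corollary \ref{corollarytogetmatrixoverLeavitt}. First I would record that the preceding cancellation argument shows $M_{C_n^0}^* = \{0\}$ for every $n \geq 1$, so that via the isomorphism $(**)$ we have $K_0(L_K(C_n^0)) \cong \{0\}$, the trivial group. This is the entire content of the ``hard'' part, and it has already been dispatched.

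The decisive step is then simply to identify this trivial group as a cyclic group of the shape $\Z_{m-1}$ required by Corollary \ref{corollarytogetmatrixoverLeavitt}. Since $\{0\} = \Z_1 = \Z_{2-1}$, the hypothesis is met with $m = 2$. The side condition on the distinguished element $\sum_{i=1}^n [v_i]$ is automatic here, the whole group being trivial (this is in any case consistent with Proposition \ref{sumverticesisidentity}). I would then quote the corollary with $m = 2$ to obtain $L_K(C_n^0) \cong {\rm M}_{m-1}(L_K(1,m)) = {\rm M}_1(L_K(1,2)) = L_K(1,2)$, using that ${\rm M}_1(R) = R$ for any ring $R$.

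I expect the only delicate point to be the degenerate bookkeeping at $m - 1 = 1$: one should confirm that the two-vertex auxiliary graph $R_m^d$ underlying Proposition \ref{isotomatrixoverLeavittprop} plays no role here, since ${\rm M}_1(L_K(1,2))$ collapses to $L_K(1,2)$ itself. To sidestep any such worry, I would be prepared to apply the Algebraic KP Theorem directly to the pair consisting of $C_n^0$ and the rose $R_2$: both Grothendieck groups are trivial, so the unique homomorphism between them is an isomorphism sending $[L_K(C_n^0)]$ to $[L_K(R_2)]$ (both being the zero element), and both relevant determinants are nonpositive, since $\det(I_n - A_{C_n^0}^t) \leq 0$ by Proposition \ref{detisnegative} while $\det(I_1 - A_{R_2}^t) = 1 - 2 = -1$. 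This gives $L_K(C_n^0) \cong L_K(R_2) \cong L_K(1,2)$ directly.
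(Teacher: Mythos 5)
Your proposal is correct and follows the same route as the paper: the cancellation argument gives $M_{C_n^0}^* = \{0\} \cong \Z_{2-1}$, and Corollary \ref{corollarytogetmatrixoverLeavitt} with $m=2$ yields $L_K(C_n^0) \cong {\rm M}_1(L_K(1,2)) = L_K(1,2)$. Your extra check of the degenerate $d = m-1 = 1$ case and the backup direct application of the Algebraic KP Theorem are harmless additions but not needed.
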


\subsection{Leavitt path algebras of the graphs $C_n^1$}

While we have just seen that  the algebras $\{ L_K(C_n^0) \ | \ n\in \mathbb{N}\}$ are the same up to isomorphism, we show now that the algebras $L_K(C_n^1)$ are pairwise non-isomorphic, and we describe each of these as a matrix ring over a Leavitt algebra.

 Let $n\in \mathbb{N}$.   The generating relations for  $M_{C_n^1}^*$ are given by
$$[v_i] = [v_{i+1}] + [v_{i+1}] = 2[v_{i+1}]$$
for $1\leq i \leq n$, where subscripts are interpreted ${\rm mod}\ n$.  So for each $1 \leq i \leq n$ we have  that 
%$$[v_1]  = 2[v_{2}] = 4[v_3] = \cdots = 2^{i-1}[v_i] = \cdots = 2^{n-1}[v_n]= 2^n[v_1],$$
$$[v_i]  = 2[v_{i+1}] = 4[v_{i+2}] = \cdots  = 2^{n-i}[v_n]= 2^{n+1-i}[v_1].$$
%so that $(2^n -1)[v_1] = 0$ in $M_{C_n^1}^*$.  
%Using the same displayed equation, but starting instead at $[v_i]$, we see that $[v_i] = 2^{n-i+1}[v_1]$, 
In particular,  each $[v_i]$ is in the subgroup of  $M_{C_n^1}^*$ generated by $[v_1]$.  Since the set $\{[v_i] \ | \ 1\leq i \leq n\}$ generates $M_{C_n^1}^*$, we conclude that  $M_{C_n^1}^*$ is cyclic, and $[v_1]$  (indeed, any $[v_i]$) is a generator.  But by Proposition \ref{sizeofK0isH(n)} and \cite{H} we have  $|K_0(L_K(C_n^1))| = |M_{C_n^1}^*| = H_1(n) = 2^n-1$, so that $M_{C_n^1}^* \cong \Z_{2^n-1}$.   So we are in position to apply Corollary \ref{corollarytogetmatrixoverLeavitt}, which yields

\begin{proposition}
For every $n\in \mathbb{N}$,
$$L_K(C_n^1) \cong {\rm M}_{2^n-1}(L_K(1,2^n)).$$
\end{proposition}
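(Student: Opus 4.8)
The plan is to show the two hypotheses of Corollary \ref{corollarytogetmatrixoverLeavitt} are met with $m = 2^n$, after which the matrix-ring description is immediate. The preceding paragraph has already done the structural work: the relations $[v_i] = 2[v_{i+1}]$ force each generator into the cyclic subgroup generated by $[v_1]$, so $M_{C_n^1}^*$ is cyclic with $[v_1]$ a generator. Thus I need only pin down the order of this cyclic group.

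First I would invoke Proposition \ref{sizeofK0isH(n)}, which tells us $|K_0(L_K(C_n^1))| = |M_{C_n^1}^*| = H_1(n)$ provided $H_1(n) > 0$. So the key arithmetic step is to evaluate the first Haselgrove number $H_1(n) = |\det(I_n - A_{C_n^1}^t)|$. By Proposition \ref{determinantasproduct} with $j = 1$, this determinant is
$$\prod_{\ell=0}^{n-1}(1 - \omega_\ell - \omega_\ell) = \prod_{\ell=0}^{n-1}(1 - 2\omega_\ell),$$
where the $\omega_\ell$ are the $n$-th roots of unity. Evaluating the polynomial $\prod_{\ell=0}^{n-1}(X - \omega_\ell) = X^n - 1$ at $X = 1/2$ and clearing denominators gives $\prod_{\ell=0}^{n-1}(1 - 2\omega_\ell) = 2^n\prod_{\ell=0}^{n-1}(\tfrac12 - \omega_\ell) = 2^n\bigl((\tfrac12)^n - 1\bigr) = 1 - 2^n$, so $H_1(n) = |1 - 2^n| = 2^n - 1$. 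This confirms $H_1(n) > 0$ and identifies $M_{C_n^1}^* \cong \Z_{2^n - 1}$, matching the claim attributed to \cite{H} in the excerpt.

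With $M_{C_n^1}^* \cong \Z_{2^n - 1}$ established and $L_K(C_n^1)$ purely infinite simple by Proposition \ref{LKCnpis}, I am exactly in the setting of Corollary \ref{corollarytogetmatrixoverLeavitt} with $m - 1 = 2^n - 1$, i.e. $m = 2^n$. The corollary then yields
$$L_K(C_n^1) \cong \mathrm{M}_{2^n - 1}(L_K(1, 2^n)),$$
which is the desired conclusion. As a final remark, since the orders $2^n - 1$ are distinct for distinct $n$, the groups $K_0(L_K(C_n^1))$ are pairwise non-isomorphic, so the algebras $L_K(C_n^1)$ are pairwise non-isomorphic, confirming the claim made at the start of the subsection.

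The only genuinely nontrivial step is the determinant evaluation, and even that is routine once one recognizes the product as $X^n - 1$ evaluated at $X = 1/2$; the rest is a direct appeal to the already-established corollary. The main thing to be careful about is checking that the sign and absolute-value bookkeeping in passing between $\det(I_n - A_{C_n^1}^t)$, $H_1(n)$, and the group order is consistent — but Proposition \ref{detisnegative} guarantees the determinant is $\leq 0$, so $H_1(n) = -\det = 2^n - 1 > 0$, and no case distinction is needed.
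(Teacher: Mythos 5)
Your proposal is correct and follows essentially the same route as the paper: derive cyclicity of $M_{C_n^1}^*$ from the relations $[v_i]=2[v_{i+1}]$, identify its order as $H_1(n)=2^n-1$, and apply Corollary \ref{corollarytogetmatrixoverLeavitt} with $m=2^n$. The only difference is that where the paper simply cites \cite{H} for the value $H_1(n)=2^n-1$, you compute it directly by evaluating $\prod_{\ell=0}^{n-1}(1-2\omega_\ell)=2^n\bigl((\tfrac12)^n-1\bigr)=1-2^n$, which is a correct and pleasantly self-contained substitute.
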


It is well-known that the group $K_0(A)$ is an isomorphism invariant of the ring $A$.   Using this observation, we see that the algebras $\{L_K(C_n^1) \ | \ n\in \mathbb{N}\}$ are pairwise non-isomorphic, as the $K_0$ groups corresponding to different values of $n$ are of different size.

%%%%%%%%% SECTION %%%%%%%%%%%%%%%%%%%%%%%

\bigskip

\section{Leavitt path algebras of the graphs $C_n^2$}\label{SectionC^2}

In this final section we analyze the Leavitt path algebras of the form $L_K(C_n^2)$.     Let $n\in \mathbb{N}$.   The generating relations for  $M_{C_n^2}^*$ are given by
$$[v_i] = [v_{i+1}] + [v_{i+2}] $$
for $1\leq i \leq n$, where subscripts are interpreted ${\rm mod }\ n$.
For notational ease we will typically  focus on the generator $[v_1]$ of $M_{C_n^2}^*$;  corresponding to any statement  established for  $[v_1]$ in  $M_{C_n^2}^*$,  there will be (by the symmetry of the relations)  an analogous statement in  $M_{C_n^2}^*$ for each $[v_i]$, $1\leq i \leq n$.

By Proposition \ref{sizeofK0isH(n)} the sequence $H_2(n)$ will clearly play a role in this analysis.   In \cite{H}, Haselgrove establishes that the values of $H_2(n)$ may be defined by setting $H_2(1) = 1, H_2(2) = 1$, and defining the remaining values recursively as:
$$ H_2(n) = H_2(n-1) + H_2(n-2) + 1 - (-1)^n \ \ \mbox{for}  \ n\geq 2.$$
Recast, $H_2(n) = H_2(n-1) + H_2(n-2)$ if $n$ is even, while $H_2(n) = H_2(n-1) + H_2(n-2) + 2$ if $n$ is odd.   The first few terms of  the second Haselgrove sequence $H_2$  are thus
$$1,1,4,5,11,16,29,45,76,121, 199,320, ...$$

This integers which appear in the second Haselgrove sequence are  also known as the {\it associated Mersenne numbers}; this terminology is used both in the Online Encyclopedia of Integer Sequences \cite[Sequence A001350]{OEIS}, and by Haselgrove  in \cite{H}.   The second Haselgrove sequence has appeared in a number of combinatorial contexts, see e.g. \cite{B}.

A central role will be played by the elements of the standard {\it Fibonacci sequence} $F$, defined by setting  $F(1) = 1$, $F(2) = 1$, and $F(n) = F(n-1) + F(n-2)$ for all $n \geq 3$.  (We may also define $F(0) = 0$ consistently with the given recursion equation.)  Of course, $F$ is the well-known sequence 
$$1,1,2,3,5,8,13,21,34,55,89,144, ...$$
We begin by noticing that
$$[v_1] = [v_2] + [v_3] = ([v_3] + [v_4]) + [v_3] = 2[v_3] + [v_4] = 2([v_4] + [v_5]) + [v_4] = 3[v_4] + 2[v_5] = \cdots$$
which inductively gives, for $1\leq i \leq n$, 
$$[v_1] = F(i)[v_i] + F(i-1)[v_{i+1}] $$
in $M_{C_n^2}^*$.    Setting $i=n$, and using that $[v_{n+1}] = [v_1]$ by notational convention, we get in particular that 
$[v_1] = F(n)[v_n] + F(n-1)[v_1]$, so that 
$$  0 = F(n)[v_n] + (F(n-1)-1)[v_1]  \ \mbox{in} \ M_{C_n^2}^*.$$

As is standard, for integers $a,b$, ${\rm gcd}(a,b)$ denotes the greatest common divisor of $a$ and $b$.   A key role will be played by the following integer.

\begin{definition}\label{d(n)definition}
For any positive integer $n$ we define
$$d(n) = {\rm gcd}(F(n), F(n-1)-1).$$
(We often denote $d(n)$ simply by $d$ when appropriate.)
\end{definition}

In particular, the previous displayed equation yields

\begin{lemma}\label{d(n)xis0}
We let $x$ denote the element  
$ \frac{F(n)}{d}[v_n] + \frac{F(n-1)-1}{d}[v_1]$  of  $M_{C_n^2}^*$.   Then $dx = 0$ in $M_{C_n^2}^*$.
\end{lemma}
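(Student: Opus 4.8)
The statement follows almost immediately from the displayed equation
$0 = F(n)[v_n] + (F(n-1)-1)[v_1]$ in $M_{C_n^2}^*$ that was derived just before Definition \ref{d(n)definition} (by setting $i=n$ in the inductive formula $[v_1] = F(i)[v_i] + F(i-1)[v_{i+1}]$ and using $[v_{n+1}] = [v_1]$). The only genuine content to record is that the two coefficients appearing in $x$ are bona fide integers, so that $x$ really is a well-defined element of $M_{C_n^2}^*$ and $dx$ makes sense; this is exactly what $d = {\rm gcd}(F(n), F(n-1)-1)$ guarantees, since by definition $d$ divides each of $F(n)$ and $F(n-1)-1$.

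First I would observe that because $d \mid F(n)$ and $d \mid F(n-1)-1$, both $\frac{F(n)}{d}$ and $\frac{F(n-1)-1}{d}$ lie in $\Z$, so $x$ is a legitimate element of the group $M_{C_n^2}^*$ (recall from Proposition \ref{LKCnpis} that $M_{C_n^2}^*$ is indeed a group, so integer multiples and the element $0$ are available). Next I would simply multiply $x$ by $d$ and distribute over the (abelian) group operation:
$$dx = d\Bigl(\tfrac{F(n)}{d}[v_n] + \tfrac{F(n-1)-1}{d}[v_1]\Bigr) = F(n)[v_n] + (F(n-1)-1)[v_1].$$
Invoking the displayed equation above, the right-hand side equals $0$ in $M_{C_n^2}^*$, which is precisely the assertion $dx = 0$.

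There is essentially no obstacle here: the lemma is a formal consequence of the already-established relation $0 = F(n)[v_n] + (F(n-1)-1)[v_1]$ together with the definition of $d$ as the greatest common divisor of the two coefficients. The one point that deserves an explicit (if trivial) sentence is the integrality of $F(n)/d$ and $(F(n-1)-1)/d$, since without it the expression defining $x$ would be meaningless in the group $M_{C_n^2}^*$. The role of this lemma in the broader development is to exhibit $d(n)$ as (a bound on) the order of the distinguished element $x$, which will later be leveraged alongside the size computation $|M_{C_n^2}^*| = H_2(n)$ to pin down the group structure.
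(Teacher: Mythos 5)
Your proof is correct and is essentially the paper's own argument: the paper states the lemma as an immediate consequence of the displayed relation $0 = F(n)[v_n] + (F(n-1)-1)[v_1]$, which is exactly what you invoke after multiplying $x$ by $d$. Your explicit remark on the integrality of the coefficients is a minor (and harmless) addition that the paper leaves implicit.
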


%%%%%%%%%%%%%%%%%%%%%%%%%%%%%%%%%%%%%%%%%% 
%%%%%%%%%%%%%%%%%%%%% 
%%%%%%%%%%%%%%%%%%%%% NEW GONZALO
%%%%%%%%%%%%%%%%%%%%% 
%%%%%%%%%%%%%%%%%%%%%%%%%%%%%%%%%%%%%%%%%% 

Our goal will be to achieve an explicit description  of $K_0(L_K(C_n^2))$.  Concretely, if we interpret $\Z_1$ as the trivial group $\{0\}$, then in our main result  (Theorem \ref{TheTheorem})  we will show that $K_0(L_K(C_n^2))\cong \Z_{d(n)}\times \Z_{\frac{H_2(n)}{d(n)}}$ for every $n\in \N$. The procedure to obtain such an isomorphism will be to find the appropriate elements $x$ and $y$ which arise in the next lemma (concretely, $x$ is the element given in Lemma \ref{d(n)xis0}).

\begin{lemma}\label{grouplemma}
Let $G$ be an abelian group with order $O(G)=ab$ for some $a,b\in \N$. Suppose that there exist $x,y\in G$ such that the set $\{x,y\}$ generates $G$, and such that their orders have the property that $O(x)|a$ and $O(y)|b$. Then $G\cong \Z_a\times \Z_b$.
\end{lemma}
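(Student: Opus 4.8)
The plan is to build the obvious candidate isomorphism and then invoke a cardinality count, so that the conclusion drops out with almost no computation. First I would define the map
$$\varphi : \Z_a \times \Z_b \longrightarrow G, \qquad \varphi(\overline{m}, \overline{n}) = mx + ny.$$
The first task is to verify that $\varphi$ is well-defined. Since $O(x) \mid a$ we have $ax = 0$, so the value $mx$ depends only on the residue of $m$ modulo $a$; likewise $O(y) \mid b$ gives $by = 0$, so $ny$ depends only on $n$ modulo $b$. Hence $\varphi$ respects the equivalence classes in $\Z_a \times \Z_b$. Because $G$ is abelian, $\varphi$ is moreover a group homomorphism: $(m+m')x + (n+n')y = (mx+ny) + (m'x+n'y)$.

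Next I would establish surjectivity. As $G$ is abelian and $\{x,y\}$ generates $G$, every element of $G$ is an integer combination $mx + ny$ for some $m, n \in \Z$, and each such combination lies in the image of $\varphi$. Thus $\varphi$ is onto.

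The final step is the counting argument. We have $|\Z_a \times \Z_b| = ab = O(G) = |G|$, so $\varphi$ is a surjection between finite sets of equal cardinality and is therefore a bijection. A bijective homomorphism is an isomorphism, giving $G \cong \Z_a \times \Z_b$.

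I do not expect a genuine obstacle here; the one point demanding care is that the hypotheses only assert $O(x) \mid a$ and $O(y) \mid b$, not equality of the orders with $a$ and $b$. In particular one cannot conclude directly that $x$ and $y$ have independent orders or that the map is injective on the nose. The cardinality count is precisely what finesses this: well-definedness uses only the divisibility, surjectivity uses only that $\{x,y\}$ generates, and injectivity is then free from the equality $|{\Z_a \times \Z_b}| = |G|$. I would flag in the write-up that this is why the divisibility hypotheses, rather than exact-order hypotheses, already suffice.
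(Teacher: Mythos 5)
Your proof is correct and rests on the same counting idea as the paper's: the set of combinations $mx+ny$ has at most $ab$ elements, while generation forces it to be all of $G$. The only packaging difference is that the paper first deduces $O(x)=a$ and $O(y)=b$ before asserting the isomorphism, whereas you let the equality $|\Z_a\times\Z_b|=|G|$ convert surjectivity of $\varphi$ directly into bijectivity, which is a slightly cleaner way to finish the same argument.
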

\begin{proof}
Consider the set $S=\{mx+ny\ | \  m,n\in  \Z\}$. By hypothesis $O(x)|a$, so we get that $|\{mx \ | \  m\in \Z\}|\leq a$, and similarly $|\{ny \ | \  n\in \Z\}|\leq b$. Then $|S|\leq ab$. But the fact that the set $\{x,y\}$ generates $G$ gives $G\subseteq S$ so that $ab=O(G)\leq |S|\leq ab$. Now if either $O(x)<a$ or $O(y)<b$ then the last inequalitly would be strict, that is, $|S|< ab$, producing a contradiction. This gives that $O(x)=a$ and $O(y)=b$. Now the map which sends $(1,0)\in \Z_a\times \Z_b$ to $x\in G$ and $(0,1)\in \Z_a\times \Z_b$ to $y\in G$ can be easily checked to be a group isomorphism.
\end{proof}

The elements $y$ we are looking for in order to apply Lemma \ref{grouplemma} will depend on the values of $n$, and before we can actually find them, we will need to recall and establish several formulas which will be heavily used in the sequel,  at times without further mention.

\begin{proposition}\label{FibonacciFormulas}(see e.g. \cite{K})  For all $n,m\in \N$ we have:
\begin{equation*}
\begin{split}
(-1)^n & = F(n+1)F(n-1)-F(n)^2   \hskip1.8cm     \text{\it (Cassini's Identity)} \\
F(2n-1) & =  F(n)^2+F(n-1)^2  \hskip3cm  \text{\it (F-odd)}\\ 
F(2n) & = (F(n-1)+F(n+1))F(n)  \hskip1.7cm  \text{\it (F-even)}\\
\gcd(F(n),F(m)) & = F(\gcd(n,m))  \hskip3.9cm  \text{\it (F-gcd)} \\
F(n) \text{ is even } & \Leftrightarrow n \equiv 0 \ {\rm mod} \ 3  \hskip3.8cm \text{\it (even F-values)}  \\
F(n + 2)^2 & = 3 F(n + 1)^2 - F(n)^2 - 2 (-1)^n   \hskip1.3cm \text{(Hoggatt71)} \\
F(n + 1)^2 - F(n)^2  & = F(n + 2) F(n - 1)  \hskip3.1cm \text{(Vajda12)} \\
F(n) F(n + 1) &  = F(n - 1) F(n + 2) + (-1)^{n-1}   \hskip1.4cm \text{(Vajda20)}\\
H_2(n) & = F(n+1) + F(n-1) -1 - (-1)^n  \hskip.7cm \text{(HtoF)} 
\end{split}
\end{equation*}
\end{proposition}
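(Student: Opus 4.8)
The plan is to treat the nine identities in two groups, according to whether they are classical facts about the Fibonacci numbers or are specific to the present work. The first eight---Cassini's Identity, F-odd, F-even, F-gcd, even F-values, Hoggatt71, Vajda12, and Vajda20---are all standard and are recorded in references such as \cite{K}; I would cite them, and for completeness indicate how they arise by elementary means. Concretely, the algebraic identities all flow from the matrix formula
\begin{equation*}
\begin{pmatrix} 1 & 1 \\ 1 & 0 \end{pmatrix}^{n} = \begin{pmatrix} F(n+1) & F(n) \\ F(n) & F(n-1) \end{pmatrix}.
\end{equation*}
Taking determinants of both sides yields Cassini's Identity immediately (since $\det$ of the left side is $(-1)^n$), while comparing entries in the relation $Q^{m+k} = Q^{m}Q^{k}$, where $Q$ denotes the matrix above, gives the addition formula $F(m+k) = F(m)F(k+1) + F(m-1)F(k)$. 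Setting $k = n$ and $k = n-1$ respectively then produces F-even and F-odd after a one-line simplification, and Hoggatt71, Vajda12, and Vajda20 follow as routine consequences of the addition formula together with Cassini's Identity. The two arithmetic statements reduce to the strong divisibility property of the Fibonacci sequence: F-gcd is exactly that property, and even F-values is its special case at the residues modulo $3$, since $F(3) = 2$ gives $2 \mid F(n) \Leftrightarrow F(\gcd(n,3)) = F(3) \Leftrightarrow 3 \mid n$.

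The only identity genuinely tied to the present work is HtoF, which relates the Haselgrove sequence $H_2$ to the Fibonacci numbers, and this is where I would do the actual work. I would prove it by induction, showing that $G(n) := F(n+1) + F(n-1) - 1 - (-1)^{n}$ satisfies the same initial data and recurrence as $H_2$. Using the convention $F(0) = 0$, the base cases read $G(1) = F(2) + F(0) - 1 + 1 = 1 = H_2(1)$ and $G(2) = F(3) + F(1) - 1 - 1 = 1 = H_2(2)$. For the inductive step I would compute
\begin{equation*}
G(n-1) + G(n-2) = \bigl(F(n) + F(n-1)\bigr) + \bigl(F(n-2) + F(n-3)\bigr) - 2 - (-1)^{n-1} - (-1)^{n-2},
\end{equation*}
and then observe that $(-1)^{n-1} + (-1)^{n-2} = 0$, so that two applications of the Fibonacci recurrence collapse the right-hand side to $F(n+1) + F(n-1) - 2$. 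Adding $1 - (-1)^{n}$ gives exactly $G(n)$, which matches the recurrence $H_2(n) = H_2(n-1) + H_2(n-2) + 1 - (-1)^{n}$ recalled at the start of this section. By induction $G = H_2$, establishing HtoF.

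I do not anticipate a serious obstacle here, since the first eight identities are entirely standard and the inductive verification of HtoF is mechanical once the sign terms are handled. The only points demanding genuine care are the bookkeeping with the $(-1)^{n}$ terms and the convention $F(0) = 0$ in the base cases of HtoF, together with confirming that the $H_2$ recurrence is being applied on the correct range of indices; everything else is routine.
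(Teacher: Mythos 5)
Your proposal is correct and follows essentially the same route as the paper: the first eight identities are classical and are simply cited (as the paper does via \cite{K}), while (HtoF) is the one item requiring proof, which the paper likewise handles "using induction together with the defining equations of the sequences $H_2(n)$ and $F(n)$" --- exactly your argument. Your inductive verification of (HtoF), including the base cases with $F(0)=0$ and the cancellation of the $(-1)^{n-1}+(-1)^{n-2}$ terms, is sound.
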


\smallskip

\begin{remark} 
Formula (HtoF) of Proposition \ref{FibonacciFormulas} appears in \cite[A001350]{OEIS} without proof.  However, this formula is easy to establish, using induction together with the defining equations of the sequences $H_2(n)$ and $F(n)$.  
\end{remark} 
It will be helpful to have  a more computationally explicit description of the number $d(n)$ than that given in Definition \ref{d(n)definition}.   To get it (and to establish a number of other results in the sequel as well), we  will often use the following well-known property of  greatest common divisors: 
\begin{equation}
\gcd(a,b)=\gcd(a,b+ka)=\g(a+mb,b) \ \text{ for every }a,b,k,m\in \Z. \tag{ $\overset{*}{=}$}
 \end{equation}
 For notational ease, each time we use that fact in our proofs we will note it by $\overset{*}{=}$.

\begin{lemma}\label{gcdreductionformula}
Let $n\in \N$, and let $j\in \N$ with $j+2 \leq n$.   Then \small
$$d(n) =
% {\rm gcd}(F(n), F(n-1)-1) \ = 
 \  {\rm gcd}( \ F(n-(j+1)) + (-1)^{j+1}F(j+1) \ , \  F(n-(j+2)) + (-1)^jF(j+2) \ ).$$
\end{lemma}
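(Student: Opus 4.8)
The plan is to reduce the entire statement to a single, repeated application of the Euclidean-type reduction recorded in $(\overset{*}{=})$, after first recognizing that the two arguments of the gcd on the right-hand side are consecutive terms of one auxiliary sequence. Concretely, I would define, for $0 \le k \le n$,
$$g_k = F(n-k) + (-1)^k F(k),$$
using the convention $F(0) = 0$. This definition is chosen for two reasons. First, $g_0 = F(n) + F(0) = F(n)$ and $g_1 = F(n-1) - F(1) = F(n-1) - 1$, so that $\gcd(g_0, g_1) = d(n)$ by Definition \ref{d(n)definition}. Second, the right-hand side of the lemma is exactly $\gcd(g_{j+1}, g_{j+2})$, since $g_{j+1} = F(n-(j+1)) + (-1)^{j+1}F(j+1)$ and $g_{j+2} = F(n-(j+2)) + (-1)^{j+2}F(j+2) = F(n-(j+2)) + (-1)^j F(j+2)$. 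Thus the lemma follows once I show that $\gcd(g_k, g_{k+1})$ is independent of $k$ over the relevant range.

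The key structural fact is that $(g_k)$ satisfies the ``reversed'' Fibonacci recursion
$$g_k = g_{k+1} + g_{k+2} \qquad (0 \le k \le n-2).$$
To verify this I would add the defining expressions for $g_{k+1}$ and $g_{k+2}$ and apply the ordinary Fibonacci recursion separately to the two families of summands, namely $F(n-k-1) + F(n-k-2) = F(n-k)$ for the first and $(-1)^k\bigl(F(k+2) - F(k+1)\bigr) = (-1)^k F(k)$ for the second. The restriction $k \le n-2$ guarantees that every Fibonacci index appearing is nonnegative, so the recursion for $F$ is valid throughout.

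With this recursion in hand, a single use of $(\overset{*}{=})$ gives, for each $0 \le k \le j$,
$$\gcd(g_k, g_{k+1}) = \gcd(g_{k+1} + g_{k+2}, g_{k+1}) \overset{*}{=} \gcd(g_{k+2}, g_{k+1}) = \gcd(g_{k+1}, g_{k+2}).$$
Chaining these equalities from $k = 0$ up to $k = j$, which is permissible since $j \le n-2$ by the hypothesis $j+2 \le n$, yields
$$d(n) = \gcd(g_0, g_1) = \gcd(g_1, g_2) = \cdots = \gcd(g_{j+1}, g_{j+2}),$$
the asserted formula. The substantive step is the initial reformulation: once one spots the auxiliary sequence $g_k$ and its reversed Fibonacci recursion, the remainder is just the standard Euclidean reduction. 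Accordingly, I expect the main obstacle to be not conceptual but rather the care needed in keeping the alternating signs $(-1)^k$ and the index range $0 \le k \le n-2$ straight when establishing the recursion and when aligning $\gcd(g_{j+1},g_{j+2})$ with the exact expression displayed in the statement.
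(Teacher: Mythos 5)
Your proof is correct and takes essentially the same route as the paper: the paper proves the identity by finite induction on $j$, where the inductive step is precisely the substitution $F(n-(j+1)) = F(n-(j+2)) + F(n-(j+3))$ followed by one application of $(\overset{*}{=})$, which is exactly your verification that $\gcd(g_k,g_{k+1}) = \gcd(g_{k+1},g_{k+2})$ via the reversed recursion $g_k = g_{k+1}+g_{k+2}$. Your packaging via the explicit auxiliary sequence $g_k = F(n-k)+(-1)^kF(k)$ is a cleaner way of organizing the same computation (and, like the paper, correctly relies on the convention $F(0)=0$ at the boundary case $k = n-2$).
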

\begin{proof}
We fix $n$, and use finite induction on $j$.   We first note that 
\begin{align*}
d(n) = {\rm gcd}(F(n), F(n-1)-1)  & =  {\rm gcd}(F(n-1)+F(n-2), F(n-1)-1)  \\
&  \overset{*}{=}   {\rm gcd}(F(n-2) + 1,  F(n-1)-1),
\end{align*}
with the starred equality coming from fact $(\overset{*}{=})$ by writing  
$$F(n-1) + F(n-2) = (F(n-2) + 1) +  1 \cdot [F(n-1) - 1]  $$ and defining $a = F(n-2) + 1, m=1,$ and $b=F(n-1)-1$.  
But the expression $${\rm gcd}(F(n-1)-1, F(n-2) + 1)$$ is precisely the $j=0$ case of the proposed formula, since  $F(1) = F(2) = 1$. 

Suppose now that the formula is true for some integer $j$ having $j+3 \leq n$; we establish that the formula is true for the integer $j+1$.  But using the same idea as in the $j=0$ case, we first write $F(n-(j+1)) = F(n-(j+2)) + F(n-(j+3))$, then write  
$$\hskip-7cm F(n-(j+2)) + F(n-(j+3)) + (-1)^{j+1}F(j+1) $$
$$ =  (F(n-(j+3)) - (-1)^{j}F(j+2) + (-1)^{j+1}F(j+1)) + 1\cdot [F(n-(j+2)) + (-1)^jF(j+2)],$$ \normalsize
and then use fact $(\overset{*}{=})$ to get that 
$$\hskip-3cm {\rm gcd}( \ F(n-(j+1)) + (-1)^{j+1}F(j+1) \ , \  F(n-(j+2)) + (-1)^jF(j+2) \ )$$
\begin{align*} 
&  \overset{*}{=} {\rm gcd}( \  F(n-(j+3)) + (-1)^{j+1}F(j+2) + (-1)^{j+1}F(j+1) \ , \  F(n-(j+2)) + (-1)^{j}F(j+2) \ ) \\ 
 & = {\rm gcd}( \ F(n-(j+2)) + (-1)^{j}F(j+2) \ , \  F(n-(j+3)) + (-1)^{j+1}F(j+3) \ ),  \ \ \ \normalsize
 \end{align*}
which is the appropriate formula for $j+1.$   

We note for later that the equation $F(n-(j+1)) = F(n-(j+2)) + F(n-(j+3))$ is valid in the case $j+3 = n$, since by definition we have $F(0)=0$.  
\end{proof}

\begin{proposition}\label{d(n)formulas} For any $n\in \N$ let $d(n)$ denote ${\rm gcd}(F(n), F(n-1)-1)$.  Let $m\in \N\cup\{0\}$.  Then  \begin{align*}
d(2m+1)  & =\begin{cases}
1 & \text{if }\ 2m+1\equiv 1 \text{ or }5\ \text{\rm mod}\ 6\\
2 & \text{if }\ 2m+1\equiv 3\ \text{\rm mod}\ 6\end{cases}\\
d(2m+2)  & =\begin{cases}
F(m)+F(m+2) & \text{if }\ m \text{ is even}\\
F(m+1) & \text{if }\ m \text{ is odd}
\end{cases}
\end{align*}
These formulas thereby provide a description of the integer  $d(n)$ for all integers $n\geq 1$.
\end{proposition}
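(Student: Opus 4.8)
The plan is to feed the reduction formula of Lemma~\ref{gcdreductionformula} its most collapsed instance, $j=m-1$, so that the two Fibonacci arguments meet near the middle of the index range and telescope via the basic recursion $F(k+1)=F(k)+F(k-1)$. I will first dispose of the small cases $1\le n\le 4$ (where $j=m-1$ fails to be a positive integer) directly from Definition~\ref{d(n)definition}: one checks $d(1)=\gcd(1,-1)=1$, $d(2)=\gcd(1,0)=1$, $d(3)=\gcd(2,0)=2$, and $d(4)=\gcd(3,1)=1$, each agreeing with the claimed formulas. For the rest I assume $n\ge 5$, so that $m\ge 2$ and $j=m-1\in\N$ satisfies $j+2=m+1\le n$, legitimizing the use of the lemma.

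For the even case, write $n=2m+2$ and apply Lemma~\ref{gcdreductionformula} with $j=m-1$; then $n-(j+1)=m+2$ and $n-(j+2)=m+1$, so
$$d(2m+2)=\gcd\bigl(F(m+2)+(-1)^m F(m),\ F(m+1)\bigl(1+(-1)^{m-1}\bigr)\bigr).$$
When $m$ is even the second entry vanishes and the first is $F(m+2)+F(m)$, giving $d(2m+2)=F(m)+F(m+2)$; when $m$ is odd the first entry collapses to $F(m+2)-F(m)=F(m+1)$ while the second is $2F(m+1)$, giving $d(2m+2)=\gcd(F(m+1),2F(m+1))=F(m+1)$. This produces both lines of the even-index formula.

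For the odd case, write $n=2m+1$ and again take $j=m-1$, so that $n-(j+1)=m+1$, $n-(j+2)=m$, and
$$d(2m+1)=\gcd\bigl(F(m+1)+(-1)^m F(m),\ F(m)+(-1)^{m-1}F(m+1)\bigr).$$
A short parity check shows that in either parity of $m$ the two entries are, up to sign, $F(m+2)$ and $F(m-1)$ (using $F(m+1)+F(m)=F(m+2)$ and $F(m+1)-F(m)=F(m-1)$). Hence $d(2m+1)=\gcd(F(m-1),F(m+2))$, and formula (F-gcd) of Proposition~\ref{FibonacciFormulas} converts this into $F(\gcd(m-1,m+2))=F(\gcd(m-1,3))$, which is $F(3)=2$ precisely when $3\mid m-1$ and $F(1)=1$ otherwise.

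Finally I will translate the divisibility condition into residues modulo $6$: since $n=2m+1$ is odd, the congruence $m\equiv 1\pmod 3$ (i.e.\ $3\mid m-1$) is equivalent to $n\equiv 3\pmod 6$, whereas $m\equiv 0$ and $m\equiv 2\pmod 3$ correspond to $n\equiv 1$ and $n\equiv 5\pmod 6$ respectively; this recovers the stated dichotomy. The only genuine bookkeeping hazard is keeping the signs $(-1)^m$ and $(-1)^{m-1}$ consistent across the two parities and carrying out the final $\bmod\,6$ translation correctly. The Fibonacci manipulations themselves are immediate from the recursion and (F-gcd), so no deeper identity from Proposition~\ref{FibonacciFormulas} is required.
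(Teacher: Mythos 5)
Your proof is correct, and the even case ($n=2m+2$) is essentially the paper's argument with the index in Lemma \ref{gcdreductionformula} shifted by one: the paper takes $j=m$ and you take $j=m-1$, but both collapse one entry to $F(m+1)(1\pm 1)$ (resp.\ $F(m+1)+(-1)^{m+1}F(m+1)$) and finish with $\gcd(a,0)=a$ and $\gcd(a,2a)=a$. Where you genuinely diverge is the odd case. The paper pushes the reduction all the way to $j=2m-1$, obtaining $d(2m+1)=\gcd(F(2m+1),F(2m)-1)=\gcd(F(2m)+1,F(2m+1))$; it then observes that having $\gcd(g,h)=\gcd(h+2,g)$ forces the gcd to be $1$ or $2$, and decides between them by the parity criterion (even F-values), i.e.\ $2$ exactly when $3\mid n$. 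You instead stop the reduction at the symmetric point $j=m-1$, land on $\gcd\left(F(m+2),F(m-1)\right)$ after the sign bookkeeping, and invoke (F-gcd) to get $F(\gcd(m-1,3))$ directly. Your route is arguably cleaner and more self-contained arithmetic (one application of (F-gcd) replaces the ``gcd of $g$ and $h$ equals gcd of $h+2$ and $g$'' observation plus the Fibonacci-parity fact), at the cost of relying on the full strength of (F-gcd); the paper's version only needs the mod-$2$ information about Fibonacci numbers. Your explicit handling of $1\le n\le 4$ is also welcome, since the lemma's hypothesis $j\in\N$, $j+2\le n$ is not met there (the paper is slightly cavalier about $n=1,2$). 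All sign and mod-$6$ translations check out, so I see no gap.
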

\begin{proof}
We first suppose that $n$ is odd, and write $n = 2m+1$ for $m\in \N$.   
By definition, $d(2m+1) = {\rm gcd}(F(2m+1), F(2m) -1)$.   Using Lemma \ref{gcdreductionformula} in the case $j = 2m-1$, we get 
\begin{align*}
&  d(2m+1)  \\
\ &  = {\rm gcd}( \ F((2m+1)-(2m)) + (-1)^{2m}F(2m) \ , \  F((2m+1)-(2m+1)) + (-1)^{2m-1}F(2m+1) \ ) \\
&  = {\rm gcd}(F(1) + F(2m), F(0) + (-1)F(2m+1)) \\
& =  {\rm gcd}(1 + F(2m), - F(2m+1)) \\ 
& =  {\rm gcd}(1 + F(2m), F(2m+1)),
\end{align*}
with the final equality coming from the basic number theory fact that ${\rm gcd}(a,-b) = {\rm gcd}(a,b)$ for any integers $a,b$.  So we have
$$ {\rm gcd}(F(2m+1), F(2m) -1) = d(2m+1) =  {\rm gcd}(1 + F(2m), F(2m+1)) .$$
But if we let $g$ denote $F(2m+1)$ and $h$ denote $F(2m)-1$, we have thus established that ${\rm gcd}(g,h) = {\rm gcd}(h+2,g)$.   But again using a basic number theory fact, this last configuration can only occur when ${\rm gcd}(g,h) = 1$ or ${\rm gcd}(g,h) = 2$.   Clearly the former occurs when at least one of $g,h$ is odd, while the latter occurs when both $g$ and $h$ are even.   

Summarizing, we have established that in case $n=2m+1$, then $d(n) = 1$ when at least one of $F(n)$ and $F(n-1) - 1$ is odd, and $d(n) = 2$ when both $F(n)$ and $F(n-1)-1$ are even.    By Proposition \ref{FibonacciFormulas},  both $F(n)$ and $F(n-1)-1$ are even exactly when $n \equiv 0 \ {\rm mod} \ 3$.   For odd $n$, $n \equiv 0 \ {\rm mod} \ 3$ precisely when $n \equiv 3 \ {\rm mod} \ 6$.   This establishes the formulas for $d(n)$ when $n$ is odd.

\smallskip

Now suppose $n$ is even, and write $n=2m+2$ for some $m\in \N$.  We again use Lemma \ref{gcdreductionformula}; in this situation we let $j=m$, which yields  
\begin{align*}
 & d(2m+2) \\
 &  =  {\rm gcd}( \ F((2m+2)-(m+1)) + (-1)^{m+1}F(m+1) \ , \  F((2m+2)-(m+2)) + (-1)^{m}F(m+2) \ ) \\
 & = {\rm gcd}( \ F(m+1) + (-1)^{m+1}F(m+1) \ ,  \  F(m) + (-1)^{m}F(m+2)\ ) \\
 &  =\begin{cases}
 {\rm gcd}( \ 0 \ ,  \  F(m) + F(m+2)\ )& \text{if }\ m \text{ is even}\\
{\rm gcd}( \ 2F(m+1)  \ ,  \  F(m) - F(m+2)\ ) & \text{if }\ m \text{ is odd}.
\end{cases} 
\end{align*}
\noindent
But ${\rm gcd}(0,a) = a$ for any positive integer $a$.   Moreover, since $F(m) - F(m+2) =  -F(m+1),$  ${\rm gcd}(a,-b) = {\rm gcd}(a,b)$, and ${\rm gcd}(2a,a) = a$, we get
\smallskip
 
%\begin{align*}
$ \hskip-.5cm  =\begin{cases}
 F(m) + F(m+2)\ & \text{if }\ m \text{ is even}\\
F(m+1) & \text{if }\ m \text{ is odd}.
\end{cases}$

\smallskip
\noindent
%\end{align*}
This completes the proof of the Proposition. \end{proof}

In the following result we establish  key relationships between  $d(n)^2$ and $H_2(n)$.

\begin{proposition}\label{d^2|H_2} Let $n\in \N$. Then $d(n)^2|H_2(n)$. Furthermore, if $n\equiv 0 \m 4$ then $H_2(n)=5d(n)^2$, whereas if $n\equiv 2 \ \text{\rm mod } \ 4$ then $H_2(n)=d(n)^2$.
\end{proposition}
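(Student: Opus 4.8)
The plan is to verify the claimed formulas by reducing everything to closed-form Fibonacci expressions via the formulas of Proposition \ref{FibonacciFormulas}, and then to separate the analysis according to the residue of $n$ modulo $4$, since Proposition \ref{d(n)formulas} already gives $d(n)$ case-by-case. The starting point is the identity (HtoF), which expresses $H_2(n) = F(n+1) + F(n-1) - 1 - (-1)^n$. Since we are told $n$ is even here, $(-1)^n = 1$, so in both cases $H_2(n) = F(n+1) + F(n-1) - 2$. The goal is then purely a Fibonacci identity: to show that this quantity equals $5 d(n)^2$ when $n \equiv 0 \bmod 4$ and equals $d(n)^2$ when $n \equiv 2 \bmod 4$. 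First I would record that an even $n$ falls into the $n = 2m+2$ branch of Proposition \ref{d(n)formulas}, so $d(n) = F(m) + F(m+2)$ when $m$ is even and $d(n) = F(m+1)$ when $m$ is odd; and note that $n \equiv 0 \bmod 4$ corresponds to $m$ odd (since $n = 2(m+1)$, $4 \mid n \iff m+1$ even $\iff m$ odd), while $n \equiv 2 \bmod 4$ corresponds to $m$ even.

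Next I would treat the two residues in turn. For $n \equiv 2 \bmod 4$, i.e.\ $m$ even, the claim is $F(2m+3) + F(2m+1) - 2 = (F(m) + F(m+2))^2$. Here I would expand the right side and use the (F-odd) identity $F(2k-1) = F(k)^2 + F(k-1)^2$ to rewrite the squared terms, together with (F-even) and Cassini's Identity to handle the cross term $2F(m)F(m+2)$; I expect the $-2$ on the left to be absorbed by the $\pm 2(-1)^m$ contributions that appear when applying Cassini-type identities, and since $m$ is even those signs are controlled. For $n \equiv 0 \bmod 4$, i.e.\ $m$ odd, the claim is $F(2m+3) + F(2m+1) - 2 = 5 F(m+1)^2$. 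The factor $5$ strongly suggests using the relation $F(k-1) + F(k+1) = L(k)$ (Lucas numbers) together with the identity $L(k)^2 - 5F(k)^2 = 4(-1)^k$, or equivalently chaining (F-odd) and (Hoggatt71); I would rewrite $F(2m+3) + F(2m+1)$ as a Lucas-type expression centered at index $m+1$ and then convert back to $F(m+1)^2$ via the $5F^2$ relation, again tracking the sign $(-1)^{m+1}$ which is controlled because $m$ is odd.

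Finally, the divisibility statement $d(n)^2 \mid H_2(n)$ must be handled for all $n$, including odd $n$, not just the even cases above. For even $n$ it follows immediately once the two equalities $H_2(n) = 5d(n)^2$ and $H_2(n) = d(n)^2$ are established. For odd $n$, Proposition \ref{d(n)formulas} gives $d(n) \in \{1,2\}$, so $d(n)^2 \in \{1,4\}$; the case $d(n) = 1$ is trivial, and for $d(n) = 2$ (which occurs exactly when $n \equiv 3 \bmod 6$) I would show $4 \mid H_2(n)$ directly, using (HtoF) with $(-1)^n = -1$ to get $H_2(n) = F(n+1) + F(n-1)$, and then checking the residue of this Fibonacci sum modulo $4$ under the hypothesis $3 \mid n$ (via the (even F-values) criterion and the periodicity of Fibonacci numbers mod $4$, which has period $6$).

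I expect the main obstacle to be the bookkeeping of signs in the odd-$m$ versus even-$m$ cases: the Fibonacci quadratic identities all carry a $(-1)^k$ term, and getting the constant $-2$ on the left-hand side to match these signs exactly (so that the right-hand side comes out as a clean square with coefficient $1$ or $5$) is where a careless parity slip would break the proof. The cleanest route is probably to phrase $F(n+1) + F(n-1)$ uniformly as a Lucas number $L(n)$ and invoke the single master identity $L(2k) = 5F(k)^2 + 2(-1)^k$ together with $L(2k) = L(k)^2 - 2(-1)^k$, which packages both target equalities and makes the parity dependence transparent; the remaining work is then to identify $L(m+1)$ and $F(m)+F(m+2) = L(m+1)$ with the correct $d(n)$ from Proposition \ref{d(n)formulas}.
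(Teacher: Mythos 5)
Your plan is correct, and every step you flag as a risk does in fact check out, but your route differs from the paper's in two places worth noting. For the even cases the paper works directly: it applies (F-odd) twice to reduce $F(2m+3)+F(2m+1)-2$ to a quadratic in $F(m)$, $F(m+1)$, $F(m+2)$, and then closes the $m$-odd case with (Hoggatt71) and the $m$-even case with (Vajda12) and (Vajda20). Your Lucas-number packaging, $H_2(n) = L(2(m+1)) - 2$ with $L(2k) = L(k)^2 - 2(-1)^k = 5F(k)^2 + 2(-1)^k$, is cleaner and makes the parity dependence you were worried about completely transparent: $k = m+1$ even gives $5F(m+1)^2 + 2 - 2 = 5d(n)^2$, and $k = m+1$ odd gives $L(m+1)^2 + 2 - 2 = (F(m)+F(m+2))^2 = d(n)^2$, with the $-2$ cancelling exactly as you predicted. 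The only cost is that neither Lucas identity appears in Proposition \ref{FibonacciFormulas}, so you would need to derive them; both follow in one line from (F-odd) and Cassini's Identity (e.g.\ $L(2k) = F(k-1)^2 + 2F(k)^2 + F(k+1)^2$ and $L(k)^2 - L(2k) = 2(F(k-1)F(k+1) - F(k)^2) = 2(-1)^k$), so this is not a gap. For the remaining case $n \equiv 3 \bmod 6$, the paper rewrites $H_2(n) = 4F(n-2) + 3F(n-3)$ and uses (F-gcd) to get $8 \mid F(n-3)$; your alternative via the Pisano period of the Fibonacci sequence mod $4$ (which is $6$, giving $F(n+1) + F(n-1) \equiv F(4) + F(2) = 4 \equiv 0 \bmod 4$) is equally valid and arguably more elementary, though it relies on a periodicity fact outside the paper's stated toolkit. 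Both approaches prove the same statement; yours trades a few ad hoc quadratic manipulations for two standard auxiliary identities.
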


\begin{proof}
We use the formulas provided in Proposition \ref{d(n)formulas}.   If $n\equiv 1$ or $n \equiv 5$ ${\rm mod} \ 6$ then $d(n)=1$ and the result is obvious.     Suppose $n\equiv 3 \m  6$, so that $d(n)=2$.  Then 
\begin{align*} H_2(n) & =F(n+1)+F(n-1)-1-(-1)^n  \ \  \ \ \ \ {\rm by \ (HtoF)} \\
& =F(n+1)+F(n-1) \ \ \hskip2.5cm  {\rm as } \ n \ {\rm is \ odd} \\
 & =F(n)+2F(n-1)  \\
 & =3F(n-1)+F(n-2) \\
 & =4F(n-2)+3F(n-3).
\end{align*} 
Thus  $d(n)^2=4$ is a divisor of $H_2(n)$ if and only if $4$ is a divisor of $F(n-3)$. Using formula (F-gcd)  we get $$\gcd(8,F(n-3))=\gcd(F(6),F(n-3))=F(\gcd(6,n-3))=F(6)=8.$$ In other words, $8|F(n-3)$ and the claim follows.   Thus we have established the assertion for $n$ odd.

When $n$ is even there are two possibilities.    First, suppose $n=2m+2$ with $m$ odd.   Then $d(n) = F(m+1)$ by Proposition \ref{d(n)formulas}.   On the other hand, 
\begin{align*}
H_2(n)  & = H_2(2m+2)  \\
& = F(2m+3) + F(2m+1) -1 - (-1)^{2m+2}  \hskip1cm {\rm by \ (HtoF)} \\
& = F(2m+3) + F(2m+1)-2.
\end{align*}
    We seek to show that $H_2(n) = 5d(n)^2$; i.e., we seek to show that
$$F(2m+3) + F(2m+1)-2 = 5F(m+1)^2. $$
Using formula (F-odd) twice on the left hand side, this is equivalent to showing that
$$F(m+2)^2 + 2F(m+1)^2  + F(m)^2-2 = 5F(m+1)^2,$$
which holds if and only if
$$F(m+2)^2  + F(m)^2-2 = 3F(m+1)^2.$$
But since $m$ is odd, this last assertion is indeed true by formula (Hoggatt71).  

Now suppose $n=2m+2$ with $m$ even.   Then $d(n) = F(m) + F(m+2)$ by Proposition \ref{d(n)formulas}, while $H_2(n) = H_2(2m+2) = F(2m+3) + F(2m+1) -1 - (-1)^{2m+2} = F(2m+3) + F(2m+1)-2.$  In this case we seek to show that $H_2(n) = d(n)^2$; i.e., we seek to show that 
$$F(2m+3) + F(2m+1)-2 = (F(m) + F(m+2))^2.$$
Expanding  the right side of the displayed equation and using  $F(m+2) = F(m) + F(m+1)$, we easily get
$$(F(m) + F(m+2))^2 = 4F(m)^2 + 4F(m)F(m+1) + F(m+1)^2.$$
On the other hand, as in the previous paragraph, we get 
$$F(2m+3) + F(2m+1)-2 = F(m+2)^2 + 2F(m+1)^2  + F(m)^2-2.$$
Thus we seek to show that 
$$F(m+2)^2 + 2F(m+1)^2  + F(m)^2-2 = 4F(m)^2 + 4F(m)F(m+1) + F(m+1)^2,$$
which is clearly equivalent to showing that
$$F(m+2)^2 + F(m+1)^2 - 3 F(m)^2 - 4F(m)F(m+1) - 2 = 0.$$
But
\begin{align*}
& \hskip-1cm F(m+2)^2 + F(m+1)^2 - 3 F(m)^2 - 4F(m)F(m+1) - 2  \\
 & = (F(m) + F(m+1))^2 + F(m+1)^2 - 3 F(m)^2 - 4F(m)F(m+1) - 2 \\
 & = 2F(m+1)^2 - 2 F(m)^2 - 2F(m)F(m+1) - 2 \\
 & = 2 F(m+2)F(m-1) -2F(m)F(m+1) - 2 \hskip1cm \text{(Vajda12)} \\
 & = 2 \cdot [- (-1)^{m-1} ] -2   \hskip1cm \text{(Vajda20)} \\
 &  = 2 \cdot 1 -2 \hskip1cm \text{(since $m$ is even)} \\
 & =0
\end{align*}
as desired. 
\end{proof}

Since our goal will be to show that $K_0(L_K(C_n^2))\cong \Z_{d(n)}\times \Z_{\frac{H_2(n)}{d(n)}}$, and Proposition \ref{d^2|H_2} gives that $d(n)|\frac{H_2(n)}{d(n)}$, then clearly in $K_0(L_K(C_n^2))$ every element will have order at most  $\frac{H_2(n)}{d(n)}$. As such, one of the important steps will be to show that, for every $1\leq i\leq n$, we have $$\text{Step 1. }\ \ \frac{H_2(n)}{d(n)}[v_i]=0 \text{ in }M^*_{C_n^2}.$$

The other major step deals with finding the appropriate $y$ as appears in Lemma \ref{grouplemma}. Concretely if $x$ is the element $\frac{F(n)}{d(n)}[v_n] + \frac{F(n-1)-1}{d(n)}[v_1]$, then $$\text{Step 2. }\ \ \text{Either }\{x,[v_1]\} \text{ or } \{x,[v_n]\} \text{ generate }M^*_{C_n^2}.$$

Before we tackle Step 1, we will elaborate Step 2 a bit further, laying down some of the equations that are sufficient to guarantee that the aformentioned sets generate $M^*_{C_n^2}$. In doing so we will end up obtaining some more properties of greatest common divisors of Fibonacci and Halselgrove numbers, one of which will in fact be needed to complete our proof of Step 1.

Concretely, suppose we would like to prove that the set $\{x,[v_1]\}$ generates $M^*_{C_n^2}$. This will be equivalent to showing that we can express $[v_n]$ as a combination of $x$ and $[v_1]$ because in that case we would be also able to generate $[v_{n-1}]=[v_n]+[v_1]$, and then all the $[v_i]$ (which are themselves a set of generators by definition) just by going backwards recursively.

So we wish to find integers $p,q$ for which $[v_n] = px + q[v_1]$ in  $M^*_{C_n^2}$, i.e., integers $p,q$ for which 
$$ p\frac{F(n)}{d(n)}[v_n]+p\frac{F(n-1)-1}{d(n)}[v_1]+q[v_1]=[v_n]. $$
Now taking into account Proposition \ref{sizeofK0isH(n)}, the previous equation will have a solution in $(p,q)$ if (perhaps not only if) the following system of congruences has a solution:
\begin{equation} \tag{\dag}
\begin{cases}
p\frac{F(n)}{d(n)} & \equiv 1\m  H_2(n)   \\
p\frac{F(n-1)-1}{d(n)}+q & \equiv 0 \m  H_2(n) .
\end{cases}
\end{equation}
This system in turn will have a solution if and only if $\gcd(\frac{F(n)}{d(n)},H_2(n))=1$. Unfortunately this does not always happen, as we see in the following result:

\begin{proposition}\label{FirstFormulaForStep2} Let $n\in \N$. Then 
$$\gcd\left(\frac{F(n)}{d(n)},H_2(n)\right)=\begin{cases}
1 & \text{ if }n \not\equiv 0 \m   6   \\
2 & \text{ if }n \equiv 0 \m  6 
\end{cases}$$
\end{proposition}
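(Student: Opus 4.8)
The plan is to prove the formula by a case analysis driven by the parity of $n$ together with the residue data that governs $d(n)$ in Proposition \ref{d(n)formulas}. Throughout I write $d=d(n)$, and I will repeatedly use the shifting fact $(\overset{*}{=})$, the coprimality $\gcd(F(k),F(k-1))=F(\gcd(k,k-1))=F(1)=1$ coming from (F-gcd), and (even F-values). Since the asserted answer always lies in $\{1,2\}$, the entire content of the proof is to reduce each gcd to a gcd with $2$ and then to pin down the exact power of $2$ (and, in one family, to eliminate a spurious factor of $5$).

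I will dispose of the odd case first. If $n$ is odd then (HtoF) gives $H_2(n)=F(n+1)+F(n-1)=F(n)+2F(n-1)$. When $n\equiv 1,5\m 6$ we have $d=1$, so the gcd is $\gcd(F(n),F(n)+2F(n-1))\overset{*}{=}\gcd(F(n),2F(n-1))=\gcd(F(n),2)=1$, the last step because $n\not\equiv 0\m 3$ forces $F(n)$ odd by (even F-values). When $n\equiv 3\m 6$ we have $d=2$ and $F(n)$ even; writing $F(n)+2F(n-1)=2(\frac{F(n)}{2}+F(n-1))$ and reducing modulo $\frac{F(n)}{2}$ gives $\gcd(\frac{F(n)}{2},H_2(n))=\gcd(\frac{F(n)}{2},2F(n-1))=\gcd(\frac{F(n)}{2},2)$, and $\frac{F(n)}{2}$ is odd since $\gcd(F(n),F(6))=F(\gcd(n,6))=F(3)=2$ forces the $2$-adic valuation of $F(n)$ to be exactly $1$. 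So the gcd is $1$ throughout the odd case, which is consistent with the claim because no odd $n$ is $\equiv 0\m 6$.

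For $n$ even I write $n=2m+2$; then (F-even) gives $F(n)=(F(m)+F(m+2))F(m+1)$, and Proposition \ref{d(n)formulas} identifies $d$ as one of these two factors ($F(m)+F(m+2)$ if $m$ is even, $F(m+1)$ if $m$ is odd), so that $F(n)/d$ is the other factor. In either subcase the ``cross'' gcd of the two factors is $\gcd(F(m+1),F(m)+F(m+2))=\gcd(F(m+1),2F(m))\overset{*}{=}\gcd(F(m+1),2)$, which equals $2$ exactly when $m+1\equiv 0\m 3$. Here Proposition \ref{d^2|H_2} pins down $H_2(n)$: when $m$ is even ($n\equiv 2\m 4$) we get $H_2(n)=d^2=(F(m)+F(m+2))^2$ and must compute $\gcd(F(m+1),(F(m)+F(m+2))^2)$; when $m$ is odd ($n\equiv 0\m 4$) we get $H_2(n)=5d^2=5F(m+1)^2$ and must compute $\gcd(F(m)+F(m+2),5F(m+1)^2)$.

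The endgame is then the following. In both even subcases any odd prime dividing the gcd would divide the cross gcd $\gcd(F(m+1),2)$, hence be impossible, so the gcd is a power of $2$ times (only in the $m$ odd subcase) a power of $5$. The factor $5$ is killed by the standard fact $5\nmid F(m)+F(m+2)$ (equivalently $(F(m)+F(m+2))^2\equiv 4(-1)^{m+1}\m 5$). For the $2$-part I will use (F-gcd) with $F(6)=8$: if $m+1\not\equiv 0\m 3$ the cross gcd is $1$ and the whole gcd collapses to $1$; if $m+1\equiv 0\m 3$ then $v_2(F(m+1))=1$ (when $m+1\equiv 3\m 6$) or $v_2(F(m+1))\ge 3$ (when $m+1\equiv 0\m 6$), and comparing this with the square of the other factor forces the $2$-part to be exactly $2$. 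Finally I check the bookkeeping: $m+1\equiv 0\m 3$ with $m$ even means $m\equiv 2\m 6$, i.e. $n\equiv 0\m 6$, and with $m$ odd means $m\equiv 5\m 6$, again $n\equiv 0\m 6$, while every other residue yields gcd $1$. This matches the claimed dichotomy. The main obstacle is precisely this last and most delicate step, namely showing that the gcd never exceeds $2$: it requires the sharp $2$-adic valuations of $F$ at multiples of $3$ and the elimination of the stray factor $5$ in the $n\equiv 0\m 4$ family.
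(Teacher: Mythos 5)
Your argument is correct, but it takes a genuinely different route from the paper's. The paper computes $A(n)=\gcd\bigl(\tfrac{F(n)}{d(n)},H_2(n)\bigr)$ by massaging it (via repeated use of $(\overset{*}{=})$ and the Fibonacci recursion) into a self-recursive form $A(n)=A(n-6)$ in the odd cases, and into $\gcd(F(m)+F(m+2),4)$ resp.\ $\gcd(F(m+1),4)$ with a $12$-cycle in $n$ in the even cases, and then finishes by checking finitely many base values. You instead argue structurally: in the even case $n=2m+2$ the factorization $F(n)=(F(m)+F(m+2))F(m+1)$ from (F-even) exhibits $F(n)/d(n)$ as the complementary factor to $d(n)$, the ``cross gcd'' of the two factors collapses to $\gcd(F(m+1),2)$ by coprimality of consecutive Fibonacci numbers, and Proposition~\ref{d^2|H_2} (proved earlier, so no circularity) reduces everything to a $2$-adic (and, for $n\equiv 0 \m 4$, a $5$-adic) valuation count; the odd case is a two-line direct computation. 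Your approach buys a conceptual explanation of \emph{why} the answer is always $1$ or $2$ and why the threshold is $n\equiv 0\m 6$ (it is exactly the condition $3\mid m+1$ making $F(m+1)$ even), at the cost of needing the sharp valuations $v_2(F(k))=1$ for $k\equiv 3\m 6$, $v_2(F(k))\geq 3$ for $k\equiv 0\m 6$, and the identity $(F(m)+F(m+2))^2=5F(m+1)^2+4(-1)^{m+1}$ (which does follow from Cassini and (Hoggatt71)); the paper's approach avoids all valuation arguments but is computation-heavy and offers less insight into the final dichotomy. One spot you should flesh out when writing this up: in the subcase $n\equiv 0\m 4$ with $6\mid m+1$, the relevant valuation is $v_2(F(m)+F(m+2))$, not $v_2(F(m+1))$; you need to note explicitly that $F(m)+F(m+2)=2F(m)+F(m+1)$ with $F(m)$ odd and $4\mid F(m+1)$ forces $v_2(F(m)+F(m+2))=1$, which is what actually pins the $2$-part of the gcd at exactly $2$ there.
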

\begin{proof}
The proof necessarily distinguishes several cases for $n$ (essentially those given by the formula of $d(n)$ in Proposition \ref{d(n)formulas}), and systematically uses the various Fibonacci formulas detailed in Proposition \ref{FibonacciFormulas} as well. In addition, we will write $A(n)$ instead of the longer $\gcd\left(\frac{F(n)}{d(n)},H_2(n)\right)$. 

Our approach consists in finding self-recursive formulas with some periodicity (typically $6$ or $12$) to get down to some computable cases.
\vskip0.2cm

\underline{Case 1:} $n\equiv 1\text{ or }5\m \ 6$.

In this situation Proposition \ref{d(n)formulas} gives that $d(n)=1$. Suppose that $n\geq 7$, then:
\begin{align*}
A(n) = & \gcd(F(n),F(n+1)+F(n-1) - 1 - (-1)^n)=\gcd(F(n),F(n)+2F(n-1))\\
\eg & \gcd(F(n),2F(n-1))=\gcd(F(n-1)+F(n-2),2F(n-1))\\
\eg & \g(F(n-1)+F(n-2),-2F(n-2))=  \g(2F(n-2)+F(n-3),-2F(n-2))\\
\eg & \g(F(n-3),-2F(n-2))) =  \g(F(n-3),2F(n-2))) \\
= & \g(F(n-3),2F(n-3)+2F(n-4)) \eg  \g(F(n-3),2F(n-4))
\end{align*}
At this point it might seem like we have obtained a 3-cycle recursion, but it is not quite the case as $n$ is odd while $n-3$ is even, and this impacts the formula for $H_2(n)$ or $H_2(n-3)$. However, repeating the previous steps 3 more times we do get:
\begin{align*}
A(n) = & \g(F(n-3),2F(n-4))=\dots=\g(F(n-6),2F(n-7)) \\
\eg & \g(F(n-6),F(n-6)+2F(n-7)) =  \g(F(n-6),F(n-5)+F(n-7)+0)\\
= & \g(F(n-6),H_2(n-6)) = A(n-6), \text{ for all }n\geq 7.
\end{align*}
As one quickly checks that $A(1)=A(5)=1$, this case is completed.
\vskip0.2cm

\underline{Case 2:} $n\equiv 3\m 6$.

Now by Proposition \ref{d(n)formulas} we know that $d(n)=2$ and by definition of $d(n)$ we know that $\frac{F(n)}{d(n)}\in \Z$, which allows us to perform the following computations:
\begin{align*}
A(n) = & \g(\tfrac{F(n)}{2},F(n+1)+F(n-1)- 1 - (-1)^n)=\g(\tfrac{F(n)}{2},F(n)+2F(n-1))\\
  \eg  & \g(\tfrac{F(n-1)+F(n-2)}{2},2F(n-1))\eg \g(\tfrac{2F(n-2)+F(n-3)}{2},-2F(n-2))\\
  \eg  & \g(\tfrac{3F(n-3)+2F(n-4)}{2},F(n-3)) \eg \g(\tfrac{F(n-3)+2F(n-4)}{2},F(n-3)) \\
  \eg  & \g(\tfrac{3F(n-4)+F(n-5)}{2},-2F(n-4)) \eg  \g(\tfrac{-F(n-4)+F(n-5)}{2},-2F(n-4))\\
  \eg  & \g(\tfrac{-F(n-6)}{2},-2F(n-5))= \g(\tfrac{F(n-6)}{2},2F(n-6)+2F(n-7))\\
  \eg  & \g(\tfrac{F(n-6)}{2},F(n-6)+2F(n-7)) = \g(\tfrac{F(n-6)}{2},F(n-5)+F(n-7)+0)=A(n-6)  
\end{align*}
for all $n\geq 9$. By checking that $A(3)=1$, this case is also finished.   
\vskip0.2cm

\underline{Case 3:} $n\equiv 0\m 4$.

In this case we have that $n=2m+2$ for some $m$ odd, so that Proposition \ref{d(n)formulas} gives that $d(n)=d(2m+2)=F(m+1)$. Also, using the formulas in Proposition \ref{FibonacciFormulas} we know by (F-even) that $$F(n)=F(2m+2)=F(2(m+1))=(F(m)+F(m+2))F(m+1)$$ and by (F-odd) that
$$F(2m+3)=F(2(m+2)-1)=F(m+2)^2+F(m+1)^2\text{ and }F(2m+1)=F(m+1)^2+F(m)^2,$$ whereas Cassini's identity gives $$(-1)^{m+1}=1=F(m+2)F(m)-F(m+1)^2$$
Putting all these facts together we get
\begin{align*}
A(n) = & \g(F(m)+F(m+2),F(2m+3)+F(2m+1)-1-(-1)^{2m+2})\\
     = & \g(F(m)+F(m+2),F(m+2)^2+2F(m+1)^2+F(m)^2-2)\\
     = & \g(F(m)+F(m+2),F(m+2)^2+2F(m+2)F(m)+F(m)^2-4)\\
     = & \g(F(m)+F(m+2),(F(m)+F(m+2))^2-4)\eg \g(F(m)+F(m+2),4)\\
     = & \g(F(m+1)+2F(m),4)= \g(3F(m)+F(m-1),4)= \g(4F(m-1)+3F(m-2),4)\\
   \eg & \g(3F(m-2),4)=\g(F(m-2),4)= \g(F(m-3)+F(m-4),4)\\
     = & \g(2F(m-4)+F(m-5),4) = \g(3F(m-5)+2F(m-6),4)\\
     = & \g(5F(m-6)+3F(m-7),4) = \g(8F(m-7)+5F(m-8),4) \eg \g(F(m-8),4).
\end{align*}
This gives the 6-cycle recursion $\g(F(m-2),4)=\g(F(m-8),4)$ or, in terms of $n$, a $12$-cycle recursion, so we just need to check enough base values of $A(n)$ for small $n$. They are:
$$A(4)=A(8)=1; \  A(12)=2; \ A(16)=A(20)=1; \ A(24)=2,Ê\text{ etc.}$$
\vskip0.2cm

\underline{Case 4:} $n\equiv 2\m 4$.

In this situation we can write $n=2m+2$ with $m$ even and then Proposition \ref{d(n)formulas} yields $d(n)=d(2m+2)=F(m)+F(m+2)$. Using again the formulas in Proposition \ref{FibonacciFormulas} which we displayed in Case 3 we can obtain:
\begin{align*}
A(n) = & \g(F(m+1),F(2m+3)+F(2m+1)-1-(-1)^{2m+2})\\
     = & \g(F(m+1),F(m+2)^2+2F(m+1)^2+F(m)^2-2) \\
   \eg & \g(F(m+1),(F(m+1)+F(m))^2+F(m)^2-2) \\
   =   & \g(F(m+1),F(m+1)(F(m+1)+2F(m))+2F(m)^2-2) \eg \g(F(m+1),2F(m)^2-2)
\end{align*}     
Now because $m$ is even, Cassini's formula gives
$$(-1)^{m}=1=F(m+1)F(m-1)-F(m)^2$$
so that we can continue with the simplification to get $$A(n) = \g(F(m+1),2F(m+1)F(m-1)-4) \eg \g(F(m+1),4),$$ which would eventually lead to a $6$-cycle in $m$ (or a $12$-cycle in $n$) just as in previous Case 3. Thus, only the smallest values of $A(n)$ need to be checked, and they are the following:
$$A(2)=1; \  A(6)=2; \ A(10)=A(14)=1; \ A(18)=2,Ê\text{ etc.}$$
This completes the proof.\end{proof}

By  Proposition \ref{FirstFormulaForStep2} and the discussion prior to its statement, we immediately get 
\begin{corollary}\label{Step2fornnotequiv0mod6}   \ {\rm (Step 2 in case $n\not\equiv 0$ ${\rm mod}  \ 6$.)}   
Suppose $n\not\equiv 0 $ ${\rm mod} \ 6$.   Then the set $\{x, [v_1]\}$ generates $M^*_{C_n^2}$.
\end{corollary}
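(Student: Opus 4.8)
The plan is to leverage the serious arithmetic already carried out in Proposition \ref{FirstFormulaForStep2}, converting its gcd statement into the desired generation claim by a short structural argument. The single genuinely hard computation --- establishing the coprimality of $F(n)/d(n)$ and $H_2(n)$ --- is exactly what that proposition supplies, so what remains here is essentially bookkeeping.

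First I would record the two reductions that make the statement fall out cleanly. Since the $H_2$ sequence is positive, Proposition \ref{sizeofK0isH(n)} gives $|M^*_{C_n^2}| = H_2(n)$; hence by Lagrange's theorem $H_2(n)\cdot g = 0$ for every $g \in M^*_{C_n^2}$. This is the fact that licenses working modulo $H_2(n)$: if two integer combinations of $[v_n]$ and $[v_1]$ have coefficients agreeing $\m H_2(n)$, they name the same group element. Second, I would reduce generation to a single membership, namely $[v_n] \in \langle x, [v_1]\rangle$. Once $[v_n]$ and $[v_1]$ lie in this subgroup, so does $[v_{n-1}] = [v_n] + [v_1]$ (the defining relation at $i=n-1$, with $[v_{n+1}] = [v_1]$), and then running the relations $[v_i] = [v_{i+1}]+[v_{i+2}]$ backward from $i = n-2$ down to $i=1$ places every $[v_i]$ in $\langle x, [v_1]\rangle$. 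As the $[v_i]$ generate $M^*_{C_n^2}$ by definition, this forces $\langle x, [v_1]\rangle = M^*_{C_n^2}$.

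With these in hand I would substitute $x = \frac{F(n)}{d}[v_n] + \frac{F(n-1)-1}{d}[v_1]$ into the target equation $[v_n] = px + q[v_1]$, collect the coefficients of $[v_n]$ and of $[v_1]$, and note that the congruence system (\dag) is sufficient (using the first reduction) for the equation to hold in $M^*_{C_n^2}$. Solvability of (\dag) rests solely on its first congruence $p\,\frac{F(n)}{d} \equiv 1 \m H_2(n)$, which has a solution in $p$ precisely when $\gcd\!\left(\frac{F(n)}{d}, H_2(n)\right) = 1$; the second congruence then determines $q$. Proposition \ref{FirstFormulaForStep2} delivers exactly this coprimality under $n \not\equiv 0 \m 6$, so $(p,q)$ exist and $[v_n]$ is an explicit combination of $x$ and $[v_1]$, completing the proof.

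The one point I would be careful about is the direction of the coefficient-matching step: system (\dag) is sufficient but, as the discussion preceding Proposition \ref{FirstFormulaForStep2} warns, perhaps not necessary for generation. I would therefore present it as a one-way implication --- a solution of (\dag) \emph{produces} the required membership --- and make no attempt to infer failure of generation from failure of (\dag); the residual case $n \equiv 0 \m 6$ is treated separately.
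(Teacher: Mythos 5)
Your proposal is correct and follows essentially the same route as the paper: the paper's proof of this corollary is precisely the discussion preceding Proposition \ref{FirstFormulaForStep2} (reduce generation to expressing $[v_n]$ in terms of $x$ and $[v_1]$, work modulo $H_2(n)$ via Proposition \ref{sizeofK0isH(n)}, and observe that solvability of the system (\dag) reduces to $\gcd(F(n)/d(n),H_2(n))=1$), combined with that proposition's gcd computation. Your explicit remarks on the backward recursion through the relations $[v_i]=[v_{i+1}]+[v_{i+2}]$ and on the one-directional nature of (\dag) match the paper's own caveats.
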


Later in the paper we will come back to the consideration of the system of congruences needed to obtain Step 2 in the case that $n\equiv 0\m 6$ (that is, when the system (\dag) does not have a solution). But before we get there, we will actually need to have proved Step 1, which we now are able to do, by making use of Proposition \ref{FirstFormulaForStep2}.

\begin{proposition}\label{Step1}  \ {\rm (Step 1.)}  \ Let $n\in \N$. Any element in $M^*_{C_n^2}$ has order dividing $\frac{H_2(n)}{d(n)}$.
\end{proposition}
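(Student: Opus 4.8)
The plan is to pin down the structure of the finite abelian group $G:=M^{*}_{C_n^2}$ exactly, by computing its two invariant factors, and then to show that the larger of them --- which is the exponent of $G$, hence an upper bound for the order of every element --- divides $N:=\frac{H_2(n)}{d(n)}$. Recall that $G$ is finite with $|G|=H_2(n)$ by Proposition \ref{sizeofK0isH(n)} (the sequence $H_2$ consists of positive integers). Once $\exp(G)\mid N$ is known, the statement follows immediately, since the order of any element divides the exponent.

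First I would show that $G$ is generated by the two classes $[v_1]$ and $[v_2]$ alone. Rewriting each defining relation as $[v_{i+2}]=[v_i]-[v_{i+1}]$ lets one solve recursively for $[v_3],\dots,[v_n]$ as $\Z$-combinations of $[v_1]$ and $[v_2]$; a short induction on this signed Fibonacci recursion yields $[v_k]=(-1)^{k-1}F(k-2)[v_1]+(-1)^kF(k-1)[v_2]$ for all $k\geq 3$. The only defining relations not yet consumed are those indexed by $i=n-1$ and $i=n$; invoking the cyclic conventions $[v_{n+1}]=[v_1]$ and $[v_{n+2}]=[v_2]$ and substituting the formula above, these collapse to two relations purely in $[v_1]$ and $[v_2]$, with presentation matrix
$$R=\begin{pmatrix} (-1)^nF(n-1)-1 & (-1)^{n+1}F(n)\\ (-1)^{n+1}F(n) & (-1)^nF(n+1)-1 \end{pmatrix}.$$
Thus $G\cong \Z^2/{\rm Im}(R)$, and the Smith normal form of this $2\times 2$ matrix exhibits the invariant factors as $d_1=\gcd(\text{entries of }R)$ and $d_2=\frac{|\det R|}{d_1}=\frac{H_2(n)}{d_1}$, with $d_1\mid d_2$ and $\exp(G)=d_2$.

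Everything then reduces to the single divisibility claim $d(n)\mid d_1$: granting it, $d_2=\frac{H_2(n)}{d_1}$ divides $\frac{H_2(n)}{d(n)}=N$, as desired. To prove $d(n)\mid d_1$ I would show $d(n)$ divides every entry of $R$, splitting on the parity of $n$. For $n$ even the entries are, up to sign, $F(n-1)-1$, $F(n)$, and $F(n+1)-1$; the first two are divisible by $d(n)=\gcd(F(n),F(n-1)-1)$ by definition, and $F(n+1)-1=F(n)+(F(n-1)-1)$ settles the third. For $n$ odd the entries become (up to sign) $F(n-1)+1$, $F(n)$, and $F(n+1)+1$, and here lies the one genuinely delicate point: $d(n)\mid F(n-1)+1$ is not visible from the definition of $d(n)$. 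It is rescued by Proposition \ref{d(n)formulas}, which for odd $n$ gives $d(n)\in\{1,2\}$, whence $d(n)\mid 2$ and therefore $d(n)\mid (F(n-1)-1)+2=F(n-1)+1$; finally $F(n+1)+1=F(n)+(F(n-1)+1)$ disposes of the last entry.

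I expect the only real obstacle to be bookkeeping rather than conceptual: carrying the signs $(-1)^n$ correctly through the induction and into $R$, and cleanly arguing that precisely the two wrap-around relations ($i=n-1,n$) remain after the forward substitution. Note that this route bypasses a direct attack on $\frac{H_2(n)}{d(n)}[v_1]=0$; the essential arithmetic input it isolates is simply that $d(n)\leq 2$ for odd $n$.
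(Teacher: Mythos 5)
Your argument is correct, and it takes a genuinely different route from the paper's. The paper proves Step 1 by a four-way case analysis on $n$ modulo $4$ and $6$: it feeds the explicit formulas for $d(n)$ from Proposition \ref{d(n)formulas} and the identities $H_2(n)=d(n)^2$ (for $n\equiv 2 \bmod 4$) or $H_2(n)=5d(n)^2$ (for $n\equiv 0\bmod 4$) from Proposition \ref{d^2|H_2} into explicit descending recursions inside the monoid, of the shape $d(n)F(\tfrac n2-i)[v_j]=-d(n)F(\tfrac n2-(i+1))[v_{j+1}]$, and for $n\equiv 3\bmod 6$ it additionally needs the gcd computation of Proposition \ref{FirstFormulaForStep2}. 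You instead pass to a two-generator presentation, read off the invariant factors of the $2\times 2$ presentation matrix $R$ from its Smith normal form ($d_1=\gcd$ of the entries, $d_2=H_2(n)/d_1=\exp(G)$, using $|\det R|=H_2(n)$, which does follow from Cassini's identity together with (HtoF)), and reduce everything to the single divisibility $d(n)\mid d_1$; the only arithmetic input beyond the definition of $d(n)$ is that $d(n)\le 2$ for odd $n$. I checked the delicate points: your closed form for $[v_k]$ is right, your second wrap-around relation is row-equivalent over $\Z$ to the literal relation $[v_n]=[v_1]+[v_2]$ (they differ by the first row), and the parity argument for $d(n)\mid F(n-1)+1$ when $n$ is odd is sound. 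Your route is shorter and more uniform, and it buys more: it identifies the exponent of $M^*_{C_n^2}$ exactly as $H_2(n)/d_1$, so that adding the reverse divisibility $d_1\mid d(n)$ would give Theorem \ref{TheTheorem} outright, bypassing Lemma \ref{grouplemma} and the Step 2 generation arguments; what the paper's approach buys in exchange is that it works entirely with explicit elements of the monoid and produces the concrete generators $x$ and $y$ that its formulation of Lemma \ref{grouplemma} requires. The only loose ends are cosmetic: the degenerate cases $n=1,2$ (no generators to eliminate, but the statement is vacuous there since $H_2(n)=d(n)=1$), and an explicit remark that $H_2(n)>0$ for all $n$ so that the cokernel is finite and the Smith normal form argument applies.
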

\begin{proof}
Since the vertices $\{[v_i]\}$ generate $M^*_{C_n^2}$, it will suffice to check that $\frac{H_2(n)}{d(n)}[v_i]=0$ in $M^*_{C_n^2}$, and since this depends on $d(n)$, we will again need to have different cases depending on the values of $n$ in order to use the formula in Proposition \ref{d(n)formulas}.
\vskip0.2cm

\underline{Case 1:} $n\equiv 1\text{ or }5\m 6$.

This case is trivial as Proposition \ref{d(n)formulas} yields $d(n)=1$ so then an application of Proposition \ref{sizeofK0isH(n)} does it.
\vskip0.2cm

\underline{Case 2:} $n\equiv 3\m 6$.

Now Proposition \ref{d(n)formulas} gives $d(n)=2$. The first step will be to show that $d(n)^2=4$ divides $F(n-1)-1$. Write $n = 6m+3$ and proceed by induction on $m$.  The base case gives:  $$F(3-1) - 1 = F(2)-1 = 1-1=0.$$ So now suppose $4\ |\ F((6m+3) - 1)-1$. We will show that $4\ |\ F((6(m+1)+3)-1)-1$.   We have 
\begin{align*}
F(6m+8) - 1 = & F(6m+6) + F(6m+7) -1 = F(6m+5) + 2F(6m+6) - 1 =\dots\\
            = & 5F(6m+2) + 8F(6m+3) - 1 = 8F(6m+3) + 4F(6m+2) + (F(6m+2)-1),
\end{align*}            
which is divisible by $4$ using the induction hypothesis. 

As we have seen, in $K_0(L_K(C_n^2))$ we have $F(n)[v_n] =-(F(n-1)-1)[v_1]$. Multiply both sides by $\frac{H_2(n)}{d(n)^2}$ (which is an integer by Proposition \ref{d^2|H_2}) to get
$$ \frac{H_2(n)}{d(n)^2} F(n)[v_n]= -\frac{H_2(n)}{d(n)^2} (F(n-1)-1)[v_1].$$
We have also just shown that $ \frac{F(n-1)-1}{d(n)^2}$ is an integer, and of course $\frac{F(n)}{d(n)}$ is an integer, so we can rewrite this as
$$ \frac{H_2(n)}{d(n)} \frac{F(n)}{d(n)} [v_n] =-H_2(n) \cdot \frac{F(n-1)-1}{d(n)^2}[v_1].$$

But by Proposition \ref{sizeofK0isH(n)} we know $|K_0(L_K(C_n^2))| = H_2(n)$, so the right hand side is $0$ in $K_0(L_K(C_n^2))$.  So we have
$$ \frac{H_2(n)}{d(n)} \frac{F(n)}{d(n)} [v_n] = 0$$
in $K_0(L_K(C_n^2))$.    But then this means that the order of $[v_n]$ divides $\frac{H_2(n)}{d(n)} \frac{F(n)}{d(n)}$.    Now use Proposition \ref{FirstFormulaForStep2} to conclude that the order of $[v_n]$ divides $\frac{H_2(n)}{d(n)}$.
This finishes this case.
\vskip0.2cm

\underline{Case 3:} $n\equiv 2\m 4$.

An application of Proposition \ref{d^2|H_2} gives that $H_2(n)=d(n)^2$, so we will need to check that $d(n)[v_i]=0$ for all $1\leq i\leq n$. Use Proposition \ref{d(n)formulas} to get that $d(n)=F(\frac{n}{2}+1)+F(\frac{n}{2}-1)$. We make use of the Fibonacci formulas in Proposition \ref{FibonacciFormulas} to get: $$F(n)=F(2\tfrac{n}{2})=(F(\tfrac{n}{2}-1)+F(\tfrac{n}{2}+1))F(\tfrac{n}{2})=d(n)F(\tfrac{n}{2}),$$ and also, taking into account that $\tfrac{n}{2}$ is odd, we obtain
\begin{align*}
F(n-1)-1 = & F(2\tfrac{n}{2}-1)-1=F(\tfrac{n}{2})^2+F(\tfrac{n}{2}-1)^2-1\\
         = & (F(\tfrac{n}{2}+1)F(\tfrac{n}{2}-1)-(-1)^{\tfrac{n}{2}})+F(\tfrac{n}{2}-1)^2-1\\
         = & (F(\tfrac{n}{2}+1)+F(\tfrac{n}{2}-1))F(\tfrac{n}{2}-1)=d(n)F(\tfrac{n}{2}-1)
\end{align*}
As we have seen before, in $M^*_{C_n^2}$ we have the equation $F(n)[v_i]=-(F(n-1)-1)[v_{i+1}]$ where the indices are interpreted mod $n$, which in view of the previous equations, yields:
$$d(n)F(\tfrac{n}{2})[v_i]=-d(n)F(\tfrac{n}{2}-1)[v_{i+1}]\text{ for all }1\leq i\leq n,  \text{ where we understand }[v_{n+1}]\text{ as }[v_1].$$
We will use those equations as well as $[v_i]=[v_{i+1}]+[v_{i+2}]$ in what follows. 
\begin{align*}
-d(n)F(\tfrac{n}{2}-1)[v_1] = & -d(n)F(\tfrac{n}{2}-1)[v_2]-d(n)F(\tfrac{n}{2}-1)[v_3]=-d(n)F(\tfrac{n}{2}-1)[v_2]+d(n)F(\tfrac{n}{2})[v_2]\\
   = & d(n)F(\tfrac{n}{2}-2)[v_2].
\end{align*}
By symmetry of the subindices we can also conclude that $-d(n)F(\tfrac{n}{2}-1)[v_3]=d(n)F(\tfrac{n}{2}-2)[v_4]$ which allows us to get
\begin{align*}
-d(n)F(\tfrac{n}{2}-2)[v_2] = & -d(n)F(\tfrac{n}{2}-2)[v_3]-d(n)F(\tfrac{n}{2}-2)[v_4]\\
= & -d(n)F(\tfrac{n}{2}-2)[v_3]+d(n)F(\tfrac{n}{2}-1)[v_3] = d(n)F(\tfrac{n}{2}-3)[v_3]
\end{align*}
In this fashion it can be shown by induction  that $$d(n)F(\tfrac{n}{2}-i)[v_j]=-d(n)F(\tfrac{n}{2}-(i+1))[v_{j+1}]\text{ for all }1\leq i\leq \tfrac{n}{2}-1\text{ and all }1\leq j\leq n$$
In particular when we are down to step $i=\tfrac{n}{2}-2$ we get $d(n)F(2)[v_j]=-d(n)F(1)[v_{j+1}]$, that is, $d(n)[v_j]=-d(n)[v_{j+1}]$, which in turn gives $$d(n)[v_{j-1}]=d(n)[v_{j}]+d(n)[v_{j+1}]=0,$$ as was needed.
\vskip0.2cm

\underline{Case 4:} $n\equiv 0\m 4$.

In this case Proposition \ref{d^2|H_2} gives that $H_2(n)=5d(n)^2$, so our goal this time will be to show that $5d(n)[v_i]=0$ for all $1\leq i\leq n$. Again apply Proposition \ref{d(n)formulas} to obtain the specific value for $d(n)$, concretely, $d(n)=F(\frac{n}{2})$ and, with similar arguments to those in the previous case we get:
$$F(n)=d(n)(F(\tfrac{n}{2}+1)+F(\tfrac{n}{2}-1))\ \text{ and }\ F(n-1)-1=d(n)(F(\tfrac{n}{2})+F(\tfrac{n}{2}-2))$$
So by symmetry of the indices we can say that 
$$d(n)(F(\tfrac{n}{2}+1)+F(\tfrac{n}{2}-1))[v_j]=-d(n)(F(\tfrac{n}{2})+F(\tfrac{n}{2}-2))[v_{j+1}]\text{ for all }1\leq j\leq n \m n.$$
We seek to get a descending process again, but this time two equations with small numbers in the variables will be needed instead of just one as in the previous case.
\begin{align*}
-d(n)(F(\tfrac{n}{2})+F(\tfrac{n}{2}-2))[v_1]=  & -d(n)(F(\tfrac{n}{2})+F(\tfrac{n}{2}-2))[v_2]-d(n)(F(\tfrac{n}{2})+F(\tfrac{n}{2}-2))[v_3]\\
      = & -d(n)(F(\tfrac{n}{2})+F(\tfrac{n}{2}-2))[v_2]+d(n)(F(\tfrac{n}{2}+1)+F(\tfrac{n}{2}-1))[v_2]\\
      = & d(n)(F(\tfrac{n}{2}-1)+F(\tfrac{n}{2}-3))[v_2]
\end{align*}
By symmetry, the roles of $[v_1]$ and $[v_2]$ can be played by any two consecutive generators, which allows us to proceed inductively until we get, for all $1\leq i\leq \tfrac{n}{2}-3$ and all $1\leq j\leq n\m n$ that
$$ d(n)(F(\tfrac{n}{2}-i)+F(\tfrac{n}{2}-(i+2)))[v_j]=-d(n)(F(\tfrac{n}{2}-(i+1))+F(\tfrac{n}{2}-(i+3)))[v_{j+1}]$$
Substituting in the previous equations the value $i=\tfrac{n}{2}-4$ we get
$$d(n)(F(4)+F(2))[v_j]=-d(n)(F(3)+F(1))[v_{j+1}], \ \mbox{so that} \   4d(n)[v_j]=-3d(n)[v_{j+1}].$$
Similarly,  by substituting instead the value  $i=\tfrac{n}{2}-3$ we get  
$$d(n)(F(3)+F(1))[v_j]=-d(n)(F(2)+F(0))[v_{j+1}]\,  \ \mbox{so that} \ \ 3d(n)[v_j]=-d(n)[v_{j+1}].$$
Using the last two equations we finally obtain
$$-3d(n)[v_j]=4d(n)[v_{j-1}]=4d(n)[v_j]+4d(n)[v_{j+1}]=4d(n)[v_j]-4\cdot 3d(n)[v_j]=-8d(n)[v_j].$$
But this yields $5d(n)[v_j]=0$, as we wanted to prove.
\end{proof}

This completes the proof of the aforementioned Step 1. Now we turn our attention again to Step 2.   We note that we have already established Step 2  in Corollary \ref{Step2fornnotequiv0mod6}   for the cases $n \not\equiv 0\m 6$. Concretely, we showed that $\{x,[v_1]\}$ generates $M^*_{C_n^2}$.

Therefore, only the $n\equiv 0\m 6$ case remains, and for it we will show that $\{x,[v_n]\}$ generates $M^*_{C_n^2}$ by setting up, and solving, a slightly different system of congruences. Thus, we would like to find integers $p,q$ for which $[v_1] = px + q[v_n]$; i.e., integers $p,q$ for which 
$$ p\frac{F(n)}{d(n)}[v_n]+p\frac{F(n-1)-1}{d(n)}[v_1]+q[v_n]=[v_1].$$
This time, instead of invoking Proposition \ref{sizeofK0isH(n)}, as we did in the other case, we will actually need something stronger, concretely Proposition \ref{Step1}, so that the previous equation will have a solution in $(p,q)$ if (perhaps not only if) the following system of congruences has a solution:
\begin{equation} \tag{$\ddagger$}
\begin{cases}
p\frac{F(n)}{d(n)} +q & \equiv 0\m \frac{H_2(n)}{d(n)}   \\
p\frac{F(n-1)-1}{d(n)} & \equiv 1 \m \frac{H_2(n)}{d(n)} 
\end{cases}
\end{equation}
%$$(\ddagger) \hskip1cm \begin{cases}
%p\frac{F(n)}{d(n)} +q & \equiv 0\m \frac{H_2(n)}{d(n)}   \\
%p\frac{F(n-1)-1}{d(n)} & \equiv 1 \m \frac{H_2(n)}{d(n)} 
%\end{cases}$$
And this system has a solution if and only if $\gcd(\frac{F(n-1)-1}{d(n)},\tfrac{H_2(n)}{d(n)})=1$, which is what the following result will prove for the cases of $n$ that we need (and for some additional cases as a by-product). Specifically, even though only the $n\equiv 0\m 6$ case remains, since the formula for $d(n)$ in those cases is given in terms of mod $4$, we will have to consider all even natural numbers  $n$, and then contemplate the two cases $n\equiv 0\text{ or }2\m 4$ separately.

\begin{proposition}\label{SecondFormulaForStep2} Let $n\in 2\N$, then: 
$$\gcd\left(\frac{F(n-1)-1}{d(n)},\frac{H_2(n)}{d(n)}\right)=1$$
\end{proposition}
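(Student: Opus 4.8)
The plan is to split on the residue of $n$ modulo $4$, since both $d(n)$ (Proposition \ref{d(n)formulas}) and the precise relation between $H_2(n)$ and $d(n)^2$ (Proposition \ref{d^2|H_2}) are governed by $n$ modulo $4$; in each subcase the two quotients $\frac{F(n-1)-1}{d(n)}$ and $\frac{H_2(n)}{d(n)}$ collapse to explicit Fibonacci expressions that were already extracted inside the proof of Proposition \ref{Step1}. Throughout I will use fact $(\overset{*}{=})$ for gcd reductions, together with the coprimality of consecutive Fibonacci numbers, i.e. the $\gcd(k,k+1)=1$ instance of formula (F-gcd) in Proposition \ref{FibonacciFormulas}.

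For $n\equiv 2\ {\rm mod}\ 4$ (so $\tfrac{n}{2}$ is odd), Proposition \ref{d^2|H_2} gives $H_2(n)=d(n)^2$, hence $\tfrac{H_2(n)}{d(n)}=d(n)=F(\tfrac{n}{2}-1)+F(\tfrac{n}{2}+1)$, while Case 3 of Proposition \ref{Step1} shows $F(n-1)-1=d(n)F(\tfrac{n}{2}-1)$, so $\tfrac{F(n-1)-1}{d(n)}=F(\tfrac{n}{2}-1)$. I would then reduce, using $F(\tfrac{n}{2}+1)=F(\tfrac{n}{2})+F(\tfrac{n}{2}-1)$,
$$\gcd\!\left(F(\tfrac{n}{2}-1),\,F(\tfrac{n}{2}-1)+F(\tfrac{n}{2}+1)\right)\overset{*}{=}\gcd\!\left(F(\tfrac{n}{2}-1),\,F(\tfrac{n}{2})\right)=1,$$
which settles this case.

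For $n\equiv 0\ {\rm mod}\ 4$ (so $\tfrac{n}{2}$ is even), Proposition \ref{d^2|H_2} gives $H_2(n)=5d(n)^2$, hence $\tfrac{H_2(n)}{d(n)}=5F(\tfrac{n}{2})$, while Case 4 of Proposition \ref{Step1} gives $d(n)=F(\tfrac{n}{2})$ and $F(n-1)-1=d(n)(F(\tfrac{n}{2})+F(\tfrac{n}{2}-2))$, so $\tfrac{F(n-1)-1}{d(n)}=F(\tfrac{n}{2})+F(\tfrac{n}{2}-2)$. The claim becomes $\gcd(F(\tfrac{n}{2})+F(\tfrac{n}{2}-2),\,5F(\tfrac{n}{2}))=1$, which I would prove in two independent pieces. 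Coprimality with $F(\tfrac{n}{2})$ follows exactly as before,
$$\gcd\!\left(F(\tfrac{n}{2})+F(\tfrac{n}{2}-2),\,F(\tfrac{n}{2})\right)\overset{*}{=}\gcd\!\left(F(\tfrac{n}{2}-2),\,F(\tfrac{n}{2}-1)\right)=1,$$
and coprimality with $5$ amounts to showing $5\nmid F(\tfrac{n}{2})+F(\tfrac{n}{2}-2)$ for even $\tfrac{n}{2}$.

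I expect this divisibility-by-$5$ statement to be the one genuine obstacle, as it is not an immediate consequence of the identities in Proposition \ref{FibonacciFormulas}. I would dispatch it by working modulo $5$: the Fibonacci sequence has Pisano period $20$ modulo $5$, so $k\mapsto F(k)+F(k-2)\ {\rm mod}\ 5$ is periodic of period $20$, and a direct check over the even residues $k\equiv 0,2,\dots,18\ {\rm mod}\ 20$ shows this value is always $1$ or $4$, hence never $0$. (Equivalently, $F(k)+F(k-2)$ equals the Lucas number $L(k-1)$, and Lucas numbers are never divisible by $5$.) Since $F(\tfrac{n}{2})+F(\tfrac{n}{2}-2)$ is thus coprime to both $F(\tfrac{n}{2})$ and $5$, it is coprime to their product $5F(\tfrac{n}{2})$, which completes this case and hence the proposition.
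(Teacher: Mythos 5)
Your proof is correct, and it follows the same skeleton as the paper's (split on $n \bmod 4$, use the factorizations $F(n-1)-1 = d(n)F(\tfrac{n}{2}-1)$ resp. $d(n)(F(\tfrac{n}{2})+F(\tfrac{n}{2}-2))$ already extracted in the proof of Proposition \ref{Step1}), but it diverges in how the second argument $\tfrac{H_2(n)}{d(n)}$ is handled. The paper re-expands $H_2(n)$ via (HtoF) and (F-odd) in each case and runs a Euclidean descent: for $n\equiv 2 \bmod 4$ it lands on $\gcd(F(m-1),F(m+1))$ and walks down to $\gcd(F(1),F(2))=1$, and for $n\equiv 0\bmod 4$ it reduces to $\gcd\bigl(F(m)+F(m-2),\,F(m-1)+F(m-3)\bigr)$ and descends to $\gcd(F(4)+F(2),F(3)+F(1))=\gcd(4,3)=1$, never confronting the factor $5$ explicitly. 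You instead invoke Proposition \ref{d^2|H_2} to collapse $\tfrac{H_2(n)}{d(n)}$ to $d(n)$ or $5d(n)$ outright; this makes the $n\equiv 2\bmod 4$ case essentially immediate, but in the $n\equiv 0\bmod 4$ case it isolates the prime $5$ and forces you to prove the extra fact that $5\nmid F(\tfrac{n}{2})+F(\tfrac{n}{2}-2)$, which is not derivable from the identities listed in Proposition \ref{FibonacciFormulas} and requires an outside input (the Pisano period of $5$, or equivalently that Lucas numbers are never divisible by $5$). Both of your justifications for that step are standard and correct, so the argument goes through; the trade-off is that your route is shorter and more conceptual (it makes visible why $5$ is harmless), while the paper's descent stays entirely within the $(\overset{*}{=})$-reduction toolkit it has already set up and needs no new number-theoretic facts.
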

\begin{proof}
  For notational simplicity we denote $\gcd\left(\frac{F(n-1)-1}{d(n)},\frac{H_2(n)}{d(n)}\right)$ by $B(n)$. The proof will again distinguish cases.
\vskip0.2cm

\underline{Case 1:} $n\equiv 0\m 4$. 

Write $n=2m$ for even $m$. In this situation, by Case 4 of Proposition \ref{Step1} we have: $$F(n-1)-1=d(n)(F(\tfrac{n}{2})+F(\tfrac{n}{2}-2))=d(n)(F(m)+F(m-2))$$ and also Proposition \ref{d(n)formulas} gives $d(n)=F(m)$. Thus
\begin{align*}
H_2(n) = & H_2(2m)=F(2m+1)+F(2m-1)-1-(-1)^{2m}\\
       = & (F(m+1)^2+F(m)^2)+(F(m)^2+F(m-1)^2)-2\\
       = & (F(m)+F(m-1))^2+2F(m)^2+F(m-1)^2-2\\
       = & 3F(m)^2+2F(m)F(m-1)+2(F(m-1)^2-1)=F(m)(3F(m)+2F(m-1)+2F(m-2))
\end{align*}
Therefore we can simplify as follows:
\begin{align*}
B(n)  = & \g(\tfrac{F(n-1)-1}{d(n)},\tfrac{H_2(n)}{d(n)})=\g(F(m)+F(m-2),F(m)+2F(m-1)+2(F(m)+F(m-2)))\\
    \eg & \g(F(m)+F(m-2),F(m+1)+F(m-1))\\
      = & \g(F(m)+F(m-2),(F(m)+F(m-2))+F(m-1)+F(m-3))\\
    \eg & \g(F(m)+F(m-2),F(m-1)+F(m-3))\\
      = & \g(F(m-1)+F(m-3),F(m)+F(m-2))=\dots=\g(F(4)+F(2),F(3)+F(1))=1
\end{align*}
\vskip0.2cm

\underline{Case 2:} $n\equiv 2\m 4$. 

Write $n=2m$ for odd $m$. Using some of the formulas we got in Case 3 of Proposition \ref{Step1} we have: $d(n)=F(m+1)+F(m-1)$ and $F(n-1)-1=d(n)F(m-1)$. We express $H_2(n)$ in terms of Fibonacci numbers prior to tackling the greatest common divisor:
\begin{align*}
H_2(n) = & F(2m+1)+F(2m-1)-1-(-1)^{2m} \hskip1cm {\rm by \ (HtoF)}\\
 = &(F(m+1)^2+F(m)^2)-1+(F(n-1)-1)\\
       = & F(m+1)^2+F(m+1)F(m-1)+(F(n-1)-1)\\
       = & F(m+1)(F(m+1)+F(m-1))+(F(n-1)-1) \\
       = & F(m+1)d(n)+(F(n-1)-1)
\end{align*}
Then the expression that we are after can be simplified in the following manner:
\begin{align*}
B(n) = & \g(\tfrac{F(n-1)-1}{d(n)},\tfrac{F(n-1)-1}{d(n)}+\tfrac{F(m+1)d(n))}{d(n)}) \eg \g(F(m-1),F(m+1))\\
     = & \g(F(m-1),F(m)+F(m-1))\eg \g(F(m-1),F(m))\\
     = & \g(F(m-1),F(m-1)+F(m-2)) \eg \g(F(m-1),F(m-2))\\
     = & \g(F(m-2),F(m-1))=\dots=\g(F(1),F(2))=1
\end{align*}
This completes the proof.
\end{proof}

We have now all the ingredients in hand to prove the main result of this article.

\begin{theorem}\label{TheTheorem} Let $n\in \N$. Then $$K_0(L_K(C_n^2))\cong \Z_{d(n)}\times \Z_{\frac{H_2(n)}{d(n)}}.$$ Furthermore, $K_0(L_K(C_n^2))$ is cyclic if and only if $n=2$, $n=4$,  $n\equiv 1\m 6$, or $n\equiv 5\m 6$. 
\end{theorem}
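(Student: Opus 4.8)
The plan is to apply Lemma \ref{grouplemma} to the abelian group $G = M^*_{C_n^2} \cong K_0(L_K(C_n^2))$. First I would record that the sequence $H_2(n)$ is strictly positive for every $n$ (immediate from its defining recursion), so by Proposition \ref{sizeofK0isH(n)} the group $G$ is finite of order $O(G) = H_2(n)$. Set $a = d(n)$ and $b = \frac{H_2(n)}{d(n)}$; by Proposition \ref{d^2|H_2} both are positive integers, $ab = H_2(n) = O(G)$, and crucially $a \mid b$.

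Next I would exhibit the two generators demanded by Lemma \ref{grouplemma}. Take $x$ to be the element of Lemma \ref{d(n)xis0}; since $d(n)x = 0$ there, $O(x) \mid d(n) = a$. For the second generator set $y = [v_1]$ when $n \not\equiv 0 \m 6$ and $y = [v_n]$ when $n \equiv 0 \m 6$. By Step 1 (Proposition \ref{Step1}) every element of $G$, in particular $y$, has order dividing $\frac{H_2(n)}{d(n)} = b$. By Step 2 the set $\{x,y\}$ generates $G$: for $n \not\equiv 0 \m 6$ this is exactly Corollary \ref{Step2fornnotequiv0mod6}, while for $n \equiv 0 \m 6$ it follows from solving the system $(\ddagger)$, whose solvability is guaranteed by Proposition \ref{SecondFormulaForStep2} (which supplies the required condition $\gcd(\frac{F(n-1)-1}{d(n)}, \frac{H_2(n)}{d(n)}) = 1$, as $n \equiv 0 \m 6$ is even). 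With $O(x) \mid a$, $O(y) \mid b$, the pair $\{x,y\}$ generating, and $O(G) = ab$, Lemma \ref{grouplemma} yields $G \cong \Z_{d(n)} \times \Z_{\frac{H_2(n)}{d(n)}}$, which is the first assertion.

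For the cyclicity statement I would use the standard fact that $\Z_a \times \Z_b$ is cyclic if and only if $\gcd(a,b) = 1$. Since $a = d(n)$ divides $b = \frac{H_2(n)}{d(n)}$ by Proposition \ref{d^2|H_2}, we have $\gcd(a,b) = d(n)$, so $K_0(L_K(C_n^2))$ is cyclic precisely when $d(n) = 1$. It then remains to read off from Proposition \ref{d(n)formulas} exactly when this happens: for odd $n$ we get $d(n) = 1$ iff $n \equiv 1$ or $5 \m 6$; for even $n = 2m+2$ we need $F(m)+F(m+2) = 1$ (when $m$ is even) or $F(m+1) = 1$ (when $m$ is odd), and since these Fibonacci expressions are strictly increasing in $m$ they equal $1$ only at the smallest index, forcing $m = 0$ (so $n = 2$) and $m = 1$ (so $n = 4$) respectively. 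Collecting these cases reproduces exactly the list $n = 2$, $n = 4$, $n \equiv 1 \m 6$, $n \equiv 5 \m 6$.

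Essentially all of the difficulty has already been absorbed into Steps 1 and 2 and the gcd computations of Propositions \ref{FirstFormulaForStep2}, \ref{SecondFormulaForStep2}, and \ref{d^2|H_2}; the remaining work in the theorem is the careful assembly of these pieces, the only genuine case split being the one in Step 2 between $n \equiv 0 \m 6$ and the other residues, together with the small finite check pinning down when $d(n) = 1$. I expect the one subtle point to be confirming that the divisibility $d(n) \mid \frac{H_2(n)}{d(n)}$ is precisely what collapses the general criterion ``$\gcd(a,b) = 1$'' into the clean condition ``$d(n) = 1$''.
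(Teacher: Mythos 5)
Your proposal is correct and follows essentially the same route as the paper: the first assertion is obtained by feeding the element $x$ of Lemma \ref{d(n)xis0} together with $y=[v_1]$ (for $n\not\equiv 0 \m 6$) or $y=[v_n]$ (for $n\equiv 0 \m 6$) into Lemma \ref{grouplemma}, with Step 1 and Step 2 supplying the order and generation hypotheses, and the cyclicity criterion is reduced via $d(n)\mid\frac{H_2(n)}{d(n)}$ (from Proposition \ref{d^2|H_2}) to the condition $d(n)=1$, which is then read off from Proposition \ref{d(n)formulas} using the monotonicity of the Fibonacci sequence. Your write-up is in fact somewhat more explicit than the paper's own assembly of these pieces, but the underlying argument is identical.
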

\begin{proof}
The statement on the isomorphism of groups is what we have done with Step 1 and Step 2 in the previous discussion by applying Lemma \ref{grouplemma} to the appropriate elements $x$ and $y$.

The final sentence hold as follows. On the one hand we know by Proposition \ref{d^2|H_2} that $d(n)^2$ divides $H_2(n)$, so that $\frac{H_2(n)}{d(n)}$ always has $d(n)$ as a factor. Hence, the only way the group product can be cyclic is when $d(n)=1$ (so that we only have the factor $\Z_{H_2(n)}$). But the formula of $d(n)$ given in Proposition \ref{d(n)formulas}, together with the fact that the Fibonacci sequence is increasing, immediately yield that the only possible values of $n$ are those given in the statement.
\end{proof}

As an easy remark, and as happened in the case of $C_n^1$, the algebras $\{L_K(C_n^2)\ |\ n\in \N\}$ are mutually non-isomorphic as their $K_0$ groups have size $H_2(n)$, and this is an strictly increasing sequence.

We are also in position to apply the Algebraic KP Theorem to explicitly realize the algebras $L_K(C_n^2)$ as the Leavitt path algebras of graphs having three vertices (we will see that in the most interesting cases it is not possible to realize them with fewer vertices than that). We got the inspiration for the graph $E_n$ below from \cite[Proposition 3.6]{EKTW}.

\begin{proposition} Let $n\in \N$. The algebra $L_K(C_n^2)$ is isomorphic to the Leavitt path algebra of the graph $E_n$ with three vertices given by
$$\xymatrix{ &   {\bullet}^{u_1}  \ar@(lu,ru)^{(2)} \ar@/^{5pt}/[dl]\ar@/^{5pt}/[dr] &  \\
           {\bullet}^{u_2} \ar@/^{5pt}/[ur] \ar@/^{5pt}/[rr] \ar@(l,d)_{(2+d(n))}&   & {\bullet}^{u_3} \ar@/^{5pt}/[ll] \ar@/^{5pt}/[ul] \ar@(r,d)^{\left(2+\tfrac{H_2(n)}{d(n)}\right)}
}$$ where the numbers in parentheses indicate the number of loops at a given vertex. Furthermore, when $K_0(L_K(C_n^2))$ is not cyclic, this realization is minimal in the sense that it is not possible to realize $L_K(C_n^2)$ up to isomorphism as the Leavitt path algebra of a graph having less than three vertices.
\end{proposition}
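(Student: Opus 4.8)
The plan is to prove the two halves of the final statement separately: first the isomorphism $L_K(C_n^2) \cong L_K(E_n)$ via the Algebraic KP Theorem, and then the minimality of the three-vertex realization when $K_0$ is not cyclic. For the isomorphism, I would verify the three hypotheses of the Algebraic KP Theorem with $E = C_n^2$ and $F = E_n$. Pure infinite simplicity of $L_K(C_n^2)$ is Proposition \ref{LKCnpis}; for $E_n$ one checks directly that it is cofinal, sink-free, and satisfies Condition (L) (it clearly has cycles at every vertex since each vertex carries loops), so $L_K(E_n)$ is also purely infinite simple. Next I would compute $K_0(L_K(E_n)) \cong \mathrm{Coker}(I_3 - A_{E_n}^t)$ using the incidence matrix read off from the graph, and show via Smith normal form (or directly) that this cokernel is isomorphic to $\Z_{d(n)} \times \Z_{H_2(n)/d(n)}$, matching $K_0(L_K(C_n^2))$ from Theorem \ref{TheTheorem}. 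The subtle point is not merely that the two $K_0$ groups are abstractly isomorphic, but that the isomorphism $\varphi$ must send $[L_K(C_n^2)] \mapsto [L_K(E_n)]$; so I would track the distinguished element $\sum_{v}[v]$ on each side. On the $C_n^2$ side this is the identity by Proposition \ref{sumverticesisidentity}, so I must arrange the generator loops on $E_n$ (the $2$, $2+d(n)$, and $2+H_2(n)/d(n)$ loops) precisely so that $\sum_i [u_i]$ is likewise the identity of $K_0(L_K(E_n))$, which is exactly why those loop counts were chosen. Finally, both determinants are $\leq 0$: the $C_n^2$ side is Proposition \ref{detisnegative}, and for $E_n$ a direct computation of $\det(I_3 - A_{E_n}^t)$ should yield $-H_2(n) \leq 0$, so the sign hypothesis holds and the KP Theorem delivers the isomorphism.

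For the minimality claim, I would argue by contradiction that when $K_0(L_K(C_n^2))$ is not cyclic, no graph with at most two vertices $F$ can have $L_K(F) \cong L_K(C_n^2)$. Since $K_0$ is an isomorphism invariant, such an $F$ would need $K_0(L_K(F)) \cong \Z_{d(n)} \times \Z_{H_2(n)/d(n)}$, a noncyclic group requiring at least two invariant factors $>1$. The key structural fact is that $K_0(L_K(F)) \cong \mathrm{Coker}(I_{|F^0|} - A_F^t)$, and for a graph with $k$ vertices this cokernel is $\Z^k$ modulo the image of a $k \times k$ integer matrix, hence a finitely generated abelian group whose minimal number of generators is at most $k$. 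A noncyclic finite abelian group requires at least two generators, which rules out $|F^0| = 1$ immediately. The real obstacle is the case $|F^0| = 2$: here the cokernel can a priori be noncyclic, so ruling out two vertices requires more than just counting generators.

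The hard part, therefore, is excluding two-vertex graphs. My approach would be to combine the purely-infinite-simple constraint with the required group structure. For $L_K(F)$ purely infinite simple on two vertices, $F$ must be cofinal, sink-free, and satisfy Condition (L); one can enumerate the possible incidence matrices $A_F = \begin{pmatrix} a & b \\ c & e \end{pmatrix}$ with nonnegative integer entries subject to these constraints, and then examine $I_2 - A_F^t = \begin{pmatrix} 1-a & -c \\ -b & 1-e \end{pmatrix}$. The cokernel of a $2 \times 2$ integer matrix $M$ is cyclic precisely unless its Smith normal form has two nontrivial invariant factors, which forces $\gcd$ of all entries of $M$ to exceed $1$ while $\det M \neq 0$; concretely $\mathrm{Coker}(M) \cong \Z_{g} \times \Z_{|\det M|/g}$ where $g = \gcd(1-a, c, b, 1-e)$. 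For this to match $\Z_{d(n)} \times \Z_{H_2(n)/d(n)}$ with $d(n) > 1$, one needs $g = d(n) > 1$, i.e. $d(n)$ divides each of $1-a$, $b$, $c$, $1-e$. I would then show this is incompatible with pure infinite simplicity: cofinality and the sink-free Condition (L) hypotheses force enough edges (in particular the off-diagonal entries $b,c$ cannot both vanish and the graph cannot reduce to disjoint pieces), and together with $d(n) \mid b, c$ this would push $|\det(I_2 - A_F^t)| = H_2(n)$ into a contradiction with the available small two-vertex configurations, or else force cyclicity. I expect this case analysis to be the genuine technical crux, and I would organize it as a short finite check over the admissible $2 \times 2$ incidence matrices rather than a general argument.
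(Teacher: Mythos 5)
Your argument for the isomorphism $L_K(C_n^2)\cong L_K(E_n)$ follows the paper's proof essentially verbatim: verify pure infinite simplicity of $L_K(E_n)$, compute the Smith normal form of $I_3-A_{E_n}^t$ to get $\Z_{d(n)}\times\Z_{H_2(n)/d(n)}$, check that $\det(I_3-A_{E_n}^t)=-H_2(n)\leq 0$, and observe that $\sum_i[u_i]$ is the identity of $K_0(L_K(E_n))$ (the paper gets this from the relation at $u_1$, which gives $\sigma=2\sigma$) so that the distinguished elements match. That half is fine.

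The minimality argument has a genuine gap in the two-vertex case, and it is precisely the point you flag as "the technical crux." Your plan is to rule out two-vertex realizations by a finite check over admissible incidence matrices, hoping that pure infinite simplicity plus $|\det(I_2-A_F^t)|=H_2(n)$ forces cyclicity or a contradiction. This cannot work as stated: two-vertex graphs with purely infinite simple Leavitt path algebra, noncyclic $K_0$, and negative determinant do exist. For instance, $A_F=\left(\begin{smallmatrix}1&2\\2&1\end{smallmatrix}\right)$ gives a cofinal, sink-free graph satisfying Condition (L), with $I_2-A_F^t=\left(\begin{smallmatrix}0&-2\\-2&0\end{smallmatrix}\right)$, $\det=-4<0$, and $K_0\cong\Z_2\times\Z_2$. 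So no amount of enumeration of $2\times 2$ incidence matrices will exclude noncyclic $K_0$ on two vertices on group-theoretic and determinant grounds alone. The missing idea --- which you correctly deploy in the first half but drop in the second --- is that a ring isomorphism also preserves the class of the regular module. By Proposition \ref{sumverticesisidentity}, $[L_K(C_n^2)]=\sum_i[v_i]$ is the identity of $K_0(L_K(C_n^2))$, so any graph $F$ with $L_K(F)\cong L_K(C_n^2)$ must satisfy $\sum_{v\in F^0}[v]=0$ in $K_0(L_K(F))$. If $|F^0|=2$ this reads $[v_1]+[v_2]=0$, hence $[v_2]=-[v_1]$ and the group, being generated by $[v_1]$ and $[v_2]$, is generated by $[v_1]$ alone and is therefore cyclic --- contradicting the noncyclicity of $K_0(L_K(C_n^2))$. (In the example above, $[v_1]+[v_2]=(1,1)\notin 2\Z\times 2\Z$ is not the identity, which is why that graph is not a counterexample to the proposition.) With this one observation the two-vertex case collapses to a two-line argument and no case analysis is needed.
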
 
\begin{proof}
Clearly the graph $E_n$ satisfies the conditions for $L_K(E_n)$ to be unital purely infinite simple.    The incidence matrix of $E_n$ is $A_{E_n}=\left(\begin{matrix} 2 & 1 & 1Ê\\ 1 & 2+d(n) & 1Ê\\ 1 & 1 & 2+\tfrac{H_2(n)}{d(n)} \end{matrix}\right),$ so that $$I_n-A^t_{E_n}=-\left(\begin{matrix} 1 & 1 & 1Ê\\ 1 & 1+d(n) & 1Ê\\ 1 & 1 & 1+\tfrac{H_2(n)}{d(n)} \end{matrix}\right).$$ 
\vskip-.2cm
\noindent
An easy computation shows that the Smith normal form of $I_n-A^t_{E_n}$ is just $\left(\begin{matrix} 1 & 0 & 0Ê\\ 0 & d(n) & 0Ê\\ 0 & 0 & \tfrac{H_2(n)}{d(n)} \end{matrix}\right),$ which immediately yields that  $K_0(L_K(E_n))$ is isomorphic to $\Z_{d(n)}\times \Z_{\frac{H_2(n)}{d(n)}}$. 

Also, it is straightforward to check that $\det(I_n-A^t_{E_n})=-H_2(n)<0$.

Finally,  by invoking the relation in $K_0(L_K(E_n))$ at  $u_1$, we have  $$[u_1] + [u_2] + [u_3] = (2[u_1] + [u_2] + [u_3]) + [u_2] + [u_3] = 2([u_1] + [u_2] + [u_3]),$$ so that $\sigma = [u_1] + [u_2] + [u_3]$ satisfies $\sigma =2\sigma$ in the group $K_0(E_n)$, so that $\sigma = [u_1] + [u_2] + [u_3]$ is the identity element of $K_0(E_n)$.

Thus  the purely infinite simple unital Leavitt path algebras  $L_K(C_n^2)$ and $L_K(E_n)$ have these properties:  $K_0(L_K(C_n^2)) \cong K_0(L_K(E_n))$ (as each is isomorphic to $\Z_{d(n)}\times \Z_{\frac{H_2(n)}{d(n)}}$); this  isomorphism takes $[L_K(C_n^2)]$ to $[L_K(E_n)]$ (as each of these is the identity element in their respective $K_0$ groups);  and  both  $\det(I_n-A^t_{C_n^2})$   and   $\det(I_n-A^t_{E_n})$ are negative.  Thus  the graphs $C_n^2$ and $E_n$ satisfy the   hypotheses of  the Algebraic KP Theorem, and so the  desired isomorphism $L_K(C_n^2) \cong L_K(E_n)$ follows.

Now suppose that $E$ is any graph for which $L_K(E)$ is purely infinite simple.  If $E$ has just one vertex then clearly $K_0(L_K(E))$ is cyclic.   But the same conclusion is true as well in case $E$ has just two vertices (say, $v_1$ and $v_2$),   and $[L_K(E)] = 0$ in $K_0(L_K(E))$:  for in this situation we have $[v_1] + [v_2] = 0$, which gives that $[v_2] \in \langle [v_1] \rangle$, and thus $[v_1]$ is a generator of $K_0(L_K(E))$.   Therefore, when 
 $K_0(L_K(C_n^2))$ is not cyclic,
 % so that $$K_0(L_K(C_n^2))\cong \Z_{d(n)}\times \Z_{\frac{H_2(n)}{d(n)}}\text{ with }1<d(n)|\tfrac{H_2(n)}{d(n)}.$$   Clearly any realization of $L_K(C_n^2)$ up to isomorphism as $L_K(E)$ by a graph $E$ having  just one vertex would give a cyclic $K_0$ group, which is not the case by Theorem \ref{TheTheorem}. So suppose we have a realization with a graph $F_2$ with just two vertices $\{v,w\}$. In this case $v+w \neq 0$ in $K_0(L_K(F_2))$. (Otherwise, since $\{v,w\}$  generates $K_0(L_K(F_2))$, the equation  $v+w = 0$ would give that $w \in <v>$, which would in turn yield that $K_0(L_K(F_2))\cong K_0(L_K(C_n^2))$ is cyclic.) Now 
 an application of Proposition \ref{sumverticesisidentity} finishes the result.
\end{proof}

\section*{Acknowledgments}

The authors are grateful to Attila Egri-Nagy, who brought to the attention of the authors the potential connection between Leavitt path algebras and Cayley graphs during the conference ``Graph C*-algebras, Leavitt path algebras and symbolic dynamics", held at the University of Western Sydney, February 2013.

%%%%%%%%% BIBLIOGRAPHY %%%%%%%%%%%%%%%%%%%%%%%

\bibliographystyle{amsalpha}

\end{document}